\newtheorem{definition}{Definition}
\newtheorem{theorem}{Theorem}
\newtheorem{corollary}{Corollary}
\newtheorem{proposition}{Proposition}
\newtheorem{remark}{Remark}
\newtheorem{question}{Question}
\newtheorem{Conjecture}{Conjecture}
\newtheorem{Fact}{Fact}
\theoremstyle{plain}
\newcommand{\Aff}{\mbox{Aff}}
\newcommand{\Homeo}{\mbox{Homeo}}
\newcommand{\Diff}{\mbox{Diff}}
\newcommand{\ind}{\mbox{$\perp \kern-5.5pt \perp$}}
\pgfplotsset{compat=1.18}
\subjclass[2020]{Primary 57S25; Secondary 37E05, 20-08}
\keywords{}
\thanks{}
\date{\today}
\begin{document}

\title{Dynamics of primitive elements under group actions}
\author{Pratyush Mishra\texorpdfstring{}{}}

\address{Department of Mathematics
\\Wake Forest University\\}

\email{mishrapratyushkumar@gmail.com; mishrap@wfu.edu}

\begin{abstract}
    We investigate group actions in which certain primitive elements fix a point, while not all group elements possess this property when acting upon some space. Using similar dynamical tools, we introduce the notion of Nielsen girth and prove the existence of groups with infinite girth but having finite Nielsen girth. 
\end{abstract}

\maketitle
\tableofcontents
\section{Introduction and statement of main results}
This paper provides a computational lens to examine the different facets of dynamics in studying various group actions. Some computations in the proof of Proposition \ref{prop:GL2R} are performed utilizing the computational software Wolfram Mathematica. The Mathematica notebook codes are included in Appendix.
\medskip\

In recent decades, there has been some substantial work on studying primitive elements to analyze the structure of a given group, sometimes under some strong assumption \cite{Platonov}. For instance, it turns out that, by an unpublished result of Zelmanov (see \cite{AB}), there exists finitely generated infinite subgroup $G=\langle s_1,s_2,\dots s_k\rangle \subseteq GL(r,\mathbb{C})$ such that, for all primitive element $g\in G$ , $g^p=1$ for some fixed positive integer $p$, although one would except such a group $G$ to be finite. On the other hand, Shpilrain (see \cite{VS}) studied primitive elements to obtain interesting results such as, endomorphisms of the free group $\mathbb{F}_2$ which act like an automorphism on one specific orbit, are indeed automorphisms of $\mathbb{F}_2$. The author also introduced the notion of generalized primitive elements and obtain results for distinguishing automorphisms among arbitrary endomorphisms of $\mathbb{F}_2$. Moreover, it is well known that if all the elements of a finitely generated subgroup $G\subset GL(n,\mathbb{C})$ are unipotent then $G$ is unipotent and it is conjugate to a subgroup of $n\times n$ upper triangular matrices. One of our major motivations for this paper is the question of Platonov (see Conjecture \ref{thm:platonovconjecture}) asking if all the primitive elements of $G$ are unipotent, can we say the group $G$ is unipotent?
\medskip

For $S=\{s_1,\dots, s_m\}$ an ordered generating set of a group $G$, one can apply the following \textit{Schreier transformations (or Schreier moves)\footnote{Note that the notion of Schreier moves here may differ from many other authors, for instance, Schreier transformations are often called Nielsen transformations.}} to obtain a new generating set $S'$ of $G$

\begin{itemize}
    \item Switching $s_i$ with $s_j$ for some $i\neq j$,
    $$S=\{s_1,s_2,\dots,s_i,\dots,s_j,\dots, s_m\} \mapsto \{s_1,s_2,\dots, s_j,\dots, s_i, \dots, s_m\}=S'.$$
   
    \item Replacing $s_i$ with $s_i^{-1}$ for some $i\in\{1,2,\dots m\}$.
    \item Replacing $s_i$ with $s_is_j$ for some $i\neq j$ and $i,j\in \{1,2,\dots, m\}.$ 
\end{itemize}  

We denote $S\sim^r_\mathcal{S} S',$ if the generating set $S'$ can be obtained from $S$ by applying atmost $r$ many Schreier transformations. Similarly, we denote $( S\sim_{\mathcal{S}} S')$ if $S'$ can be obtained from $S$ by applying finitely many Schreier transformations.

\medskip

\begin{definition}\label{defn:primitive}
Fix $m\geq d(G),$ where $d(G)$ denotes the minimal cardinality of the generating set of $G$. Let $G=\langle S_0\rangle,$ be a finitely generated group, where $S_0=\{ g_1,g_2,\dots,g_m\}$ is a fixed generating set. We define inductively the subsets $\mathcal{R}_n, n\geq 0$ of $G$ as $$\mathcal{R}_0=\{S_0\}, \mathcal{R}_{n+1}=\{ S \hspace{0.1cm}|\hspace{0.1cm}\langle S \rangle =G, |S|=m, \exists S'\in \mathcal{R}_n\hspace{0.1cm}\text{such that}\hspace{0.1cm} S\sim^1_{\mathcal{S}} S'\}$$ 
where $S\sim_{\mathcal{S}}^1S'$ means $S'$ is obtained from $S$ by applying atmost one Schreier transformation. An element $x\in G$ is primitive\footnote{Our notion of primitive element depends on choice of the $S_0$.} if there exists $S\in \displaystyle \mathop{\bigcup}_{n\geq 0}\mathcal{R}_n$ such that $x\in S$.
\end{definition}

\begin{definition}
    We define the $n^{\text{th}}$ step primitive elements of $G$ as $$\mathcal{P}_n=\{x\in G\hspace{0.1cm}|\hspace{0.1cm} x\hspace{0.1cm}\text{is primitive such that}\hspace{0.1cm} \exists S\in \mathcal{R}_n \hspace{0.1cm}\text{with}\hspace{0.1cm} x\in S \}$$
\end{definition} 

\noindent With the above definition, the collection of all the primitive elements of $G$ is given by
$$ \mathcal{P}=\displaystyle \mathop{\bigcup}_{n\geq 0}\mathcal{P}_n.$$

\noindent Also, note that

$$\mathcal{P}_0\subseteq \mathcal{P}_1\subseteq \mathcal{P}_2 \subseteq \dots \subseteq \mathcal{P}_n \subseteq \dots$$ 

\begin{Fact}\label{fact:primitivedefinition}
\cite{Whitehead} For free groups $\mathbb{F}_n=\langle a_1,a_2,\dots,a_n\rangle, n\geq 2$, an element $g\in \mathbb{F}_n$ is primitive if and only if $g=\phi(a_i)$ for some automorphism $\phi: \mathbb{F}_n\to \mathbb{F}_n$ and some element $a_i$ for $1\leq i\leq n$ in the given set of generators $\{a_1,a_2,\dots, a_n\}$ of $\mathbb{F}_n$.
\end{Fact}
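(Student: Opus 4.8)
The plan is to identify the paper's combinatorial notion of primitivity with Whitehead's characterization by setting up a precise dictionary between Schreier moves acting on \emph{ordered} generating tuples and the elementary Nielsen automorphisms of $\mathbb{F}_n$, and then invoking Nielsen's theorem that these automorphisms generate $\mathrm{Aut}(\mathbb{F}_n)$. Throughout I take $m=n=d(\mathbb{F}_n)$, so that $S_0=(a_1,\dots,a_n)$ is the standard basis and every generating $n$-tuple of $\mathbb{F}_n$ is automatically a basis — the key structural fact about free groups, following from well-definedness of rank and the Hopf property. I also regard each $S\in\mathcal{R}_n$ as an ordered tuple, since the transposition move only makes sense for ordered data.

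First I would record the dictionary, associating to each Schreier move an elementary Nielsen automorphism fixing the remaining generators: the transposition $s_i\leftrightarrow s_j$ corresponds to $\sigma_{ij}$ swapping $a_i,a_j$; the inversion $s_i\mapsto s_i^{-1}$ to $\iota_i:a_i\mapsto a_i^{-1}$; and the right multiplication $s_i\mapsto s_is_j$ to $\rho_{ij}:a_i\mapsto a_ia_j$. The central computation is that applying a move to a tuple $(\phi(a_1),\dots,\phi(a_n))$ produces $(\psi(a_1),\dots,\psi(a_n))$ with $\psi=\phi\circ\tau$, where $\tau$ is the associated automorphism; for instance $s_is_j=\phi(a_i)\phi(a_j)=\phi(a_ia_j)=(\phi\circ\rho_{ij})(a_i)$. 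Thus Schreier moves act on tuples by right composition with elementary Nielsen automorphisms. For the forward direction, if $g$ is primitive then $g$ lies in some $S$ reachable from $S_0=(\mathrm{id}(a_1),\dots,\mathrm{id}(a_n))$ by finitely many moves; induction on the number of moves gives $S=(\phi(a_1),\dots,\phi(a_n))$ for a genuine automorphism $\phi$ (a product of elementary ones), whence $g=\phi(a_i)$ for the index $i$ of its position — exactly Whitehead's form.

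For the converse, suppose $g=\phi(a_i)$ with $\phi\in\mathrm{Aut}(\mathbb{F}_n)$. By Nielsen's theorem $\mathrm{Aut}(\mathbb{F}_n)$ is generated by the elementary Nielsen automorphisms, so write $\phi=\tau_1\circ\cdots\circ\tau_k$. Applying the corresponding Schreier moves to $S_0$ in the order $\tau_1,\dots,\tau_k$ yields, by the dictionary, the tuple $(\phi(a_1),\dots,\phi(a_n))$; this is a generating $n$-tuple, hence a member of $\bigcup_{n\geq 0}\mathcal{R}_n$ containing $g=\phi(a_i)$, so $g$ is primitive.

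The main obstacle is not any single step but the careful bookkeeping that makes the dictionary exact. One must verify that the three moves genuinely exhaust a generating set of $\mathrm{Aut}(\mathbb{F}_n)$, so that Nielsen's generation theorem applies verbatim — this includes recovering the inverse operations (such as $s_i\mapsto s_is_j^{-1}$) as compositions of the allowed moves — and one must track the left-versus-right composition convention so that the order of applied moves matches the factorization of $\phi$. The genuinely deep input is Nielsen's theorem itself; everything else is the elementary, but order-sensitive, translation between tuple operations and automorphisms, together with the free-group fact that size-$n$ generating sets are bases.
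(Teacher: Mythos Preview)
Your proposal is correct and follows essentially the same route the paper indicates: the paper does not prove Fact~\ref{fact:primitivedefinition} in detail but simply records (in the remark immediately following it, citing \cite{MKS}) that it follows from the classical Nielsen generation theorem for $\mathrm{Aut}(\mathbb{F}_n)$, which is precisely the input you invoke. Your write-up is a careful fleshing-out of that remark --- the dictionary between Schreier moves and elementary Nielsen automorphisms, the Hopfian/rank argument that size-$n$ generating tuples are bases, and the composition-order bookkeeping --- and contains no gaps.
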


Here is a classical folklore conjecture which is a more general version of some of the related conjectures studied by Platonov and Potapchik \cite{Platonov}:

\begin{Conjecture}\label{con:primitiveelementnilpotent}
If all primitive elements of a finitely generated subgroup $G\subseteq GL(n,\mathbb{C})$, are unipotent, then is the group $G$ necessarily nilpotent?
\end{Conjecture}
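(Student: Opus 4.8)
The plan is to reduce the conjecture to Kolchin's theorem, which is recorded in the introduction: a finitely generated subgroup of $GL(n,\mathbb{C})$ all of whose elements are unipotent is conjugate into the upper unitriangular group, and is in particular nilpotent. It therefore suffices to strengthen the hypothesis ``every primitive element is unipotent'' to ``every element is unipotent''; equivalently, arguing by contraposition, I would show that a non-nilpotent $G$ always possesses a primitive element with an eigenvalue different from $1$.

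First I would transport the notion of primitivity to a free group. Let $\rho:\mathbb{F}_m\twoheadrightarrow G$ be the epimorphism carrying a fixed free basis onto the generating set $S_0$. Every Schreier move applied to the generating tuple of $G$ is the image under $\rho$ of the corresponding Nielsen automorphism of $\mathbb{F}_m$, so each reachable generating tuple of $G$ is $(\rho(\phi(x_1)),\dots,\rho(\phi(x_m)))$ for some $\phi\in\mathrm{Aut}(\mathbb{F}_m)$. Combined with Fact \ref{fact:primitivedefinition}, this realizes every primitive element of $G$ as $\rho(w)$ for a primitive $w\in\mathbb{F}_m$, so the hypothesis becomes: $\rho(w)$ is unipotent for all $w\in\mathrm{Prim}(\mathbb{F}_m)$.

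The heart of the argument is a Zariski-density statement. Write $H=\overline{G}$ for the Zariski closure of $G$ in $GL(n,\mathbb{C})$, and let $\mathcal{U}$ denote the Zariski-closed set of unipotent matrices, cut out by the regular equations $\mathrm{tr}(\wedge^{k} g)=\binom{n}{k}$ for $1\le k\le n$. If the set of primitive elements of $G$ is Zariski dense in $H$, then, being contained in the closed set $\mathcal{U}$ by hypothesis, the whole closure $H$---hence $G$ itself---consists of unipotent elements, and Kolchin's theorem delivers nilpotency.

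The main obstacle is this density. I would organize it along the Tits alternative. When $G$ is not virtually solvable it contains a non-elementary free subgroup, and a ping-pong construction produces free generators whose images are loxodromic and so have eigenvalues off the unit circle; when $G$ is virtually solvable but not nilpotent, I would pass to the connected solvable closure $H^{\circ}$, whose decomposition into a torus and its unipotent radical must exhibit a nontrivial torus action, exposing an element with an eigenvalue $\neq 1$. In either case one obtains a non-unipotent \emph{element} of $G$. The genuinely delicate step---and the one I expect to resist a clean treatment---is promoting this witness to a \emph{primitive} element: the non-unipotent elements supplied above generate proper free or solvable subgroups and are primitive only relative to those subgroups, so I must show that the sparse set $\mathrm{Prim}(\mathbb{F}_m)$ already carries enough spectral information about $G$ to detect a nontrivial eigenvalue, for instance by showing that the traces $\mathrm{tr}\,\rho(w)$, $w\in\mathrm{Prim}(\mathbb{F}_m)$, cannot all equal $n$ unless $H\subseteq\mathcal{U}$.
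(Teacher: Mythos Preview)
The statement you are trying to prove is recorded in the paper as a \emph{conjecture}, not a theorem; the paper offers no proof, and indeed presents it (together with Conjecture~\ref{thm:platonovconjecture}) as an open problem motivating the rest of the article. So there is no ``paper's own proof'' to compare against.

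As for the proposal itself: the reduction in your first two paragraphs is sound and standard---Kolchin's theorem does convert the task into ``some primitive element has an eigenvalue $\neq 1$ whenever $G$ is not nilpotent,'' and the lift to $\mathbb{F}_m$ via Fact~\ref{fact:primitivedefinition} is correct. But from that point on nothing is actually proved. You yourself flag the crux: having found a non-unipotent element of $G$ (which, by the contrapositive of Kolchin, is automatic once $G$ is not nilpotent), one must promote it to a \emph{primitive} element. That promotion \emph{is} the conjecture; your outline does not reduce the difficulty, it relocates it. The Zariski-density formulation is a reasonable repackaging, but you give no argument that $\rho(\mathrm{Prim}(\mathbb{F}_m))$ is dense in $\overline{G}$, and this is exactly what is unknown.

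The Tits-alternative detour does not help and contains a slip. In the non--virtually-solvable branch you assert that ping-pong produces free generators that are ``loxodromic and so have eigenvalues off the unit circle''; in $GL(n,\mathbb{C})$ this is not automatic---free subgroups can be generated by unipotent elements (already in $SL(2,\mathbb{Z})$). More importantly, even when you do locate a non-unipotent element, it is primitive only in the auxiliary free or solvable subgroup you built, not in $G$ with respect to $S_0$. The final sentence (``show that the traces $\mathrm{tr}\,\rho(w)$, $w\in\mathrm{Prim}(\mathbb{F}_m)$, cannot all equal $n$ unless $H\subseteq\mathcal{U}$'') is a restatement of the conjecture, not a step toward it. In short, the proposal is an honest outline of where the difficulty lies, but it is not a proof, and the paper does not claim one either.
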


For the group $G=\langle S \rangle$ as in Definition \ref{defn:primitive}, consider the action of $G$ on some manifold $X$, for $g\in G$, define $$Fix(g)=\{ x\in X \hspace{0.1cm}|\hspace{0.1cm} g\cdot x=x  \}.$$

All of our questions \ref{thm:holder}, \ref{thm:Q2}, and \ref{thm:firstlevelgenerators} listed below are motivated by the question mentioned in Conjecture \ref{con:primitiveelementnilpotent}.

\begin{question}\label{thm:holder}
     If $\mathcal{P}$ acts freely (without fixed points) on some space $X$, then is it true that $G$ is abelian?
\end{question}

Question \ref{thm:holder} can be seen as a generalization of the classical H\"{o}lder's theorem (Proposition 2.2.29, \cite{AN}) to general group actions. H\"{o}lder's theorem states that if a group $\Gamma$  acts freely by orientation preservation homeomorphisms on the real line then $\Gamma$ is necessarily Abelian. Recently, in \cite{AC},  Akhmedov and Cohen introduced the notion of H\"{o}lder manifold, that is, an orientable manifold $X$ with boundary $\partial X$ is H\"{o}lder if any subgroup $\Gamma \leq Homeo_+(X,\partial X)$ acting freely on $X$ and fixing the boundary $\partial X$ pointwise, is Abelian. For instance, $S^1$ and interval $[0,1]$ are H\"{o}lder. Moreover, even dimensional rational homology spheres are H\"{o}lder, which follows immediately from Lefschetz fixed-point theorem. Furthermore, Question \ref{thm:holder} seems to be related to another question posed by Navas, Calegari, and Rolfsen in \cite{CR} and \cite{AN1} about the left orderability of $Homeo(I^n,\partial I^n)$ for $n\geq 2$. In a more recent work, Hyde \cite{H} disproved Calegari's and Rolfsen's question by constructing a counterexample and proving that $Homeo(I^2,\partial I^2)$ is not left orderable. In Proposition \ref{prop:primitivefree}, we construct an example of a group $\Gamma\leq Homeo_+(\mathbb{R})$ where all the primitive elements of $\Gamma$ act freely on $\mathbb{R}$ but the group $\Gamma$ does not act freely.

\begin{question}\label{thm:Q2}
    If the Fix$(s)\neq \emptyset$, for all $s\in \mathcal{P}$, then is it true that Fix$(g)\neq \emptyset$, for all $g\in G$?
\end{question}

\begin{question}\label{thm:firstlevelgenerators}
    What if $\mathcal{P}$ is replaced by $\mathcal{P}_1$ in Question \ref{thm:Q2}?
\end{question}

Interestingly, Question \ref{thm:Q2} is related to the classical Lie-Kolchin theorem, which was proved by S. Lie in 1876 in the context of Lie algebras and then for linear algebraic groups by  E. Kolchin in 1948. 
\medskip

\textbf{Lie-Kolchin Theorem:} \textit{If $\Gamma$ is a connected solvable linear algebraic group over an algebraically closed field, then for any linear action of $G$ on a non-zero finite dimensional vector space $V$ by linear transformations, (which is also the same as viewing it as a linear representation $\rho: G \to GL(V)$), there exists a one-dimensional subspace $W\subset V$ such that for all $g(W)=W$ for all $g\in G$.}
\medskip

\noindent Before investigating Questions \ref{thm:holder},\ref{thm:Q2}, and \ref{thm:firstlevelgenerators} for various group actions, we will take an algebraic digression and address certain group-theoretic questions which also seem to be of dynamical flavor. 
\medskip

\begin{remark}
   (\cite{MKS}) Note that Fact \ref{fact:primitivedefinition} follows from another well-known fact that the group of outer automorphisms of $\mathbb{F}_n, n\geq 2$ (the quotient group $\frac{Aut(\mathbb{F}_n)}{Inn(\mathbb{F}_n)}$) is generated by the Schreier moves, where $Inn(\mathbb{F}_n)$ means the group of inner automorphisms of $\mathbb{F}_n$.
\end{remark}

In connection to Conjecture \ref{con:primitiveelementnilpotent}, the following classical result is known about the linear groups:
\begin{Fact}
(\cite{JH}) If all the elements of a finitely generated linear group $G$ are unipotent, then the group $G$ is nilpotent.
\end{Fact}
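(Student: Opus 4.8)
The plan is to prove the stronger, classical statement of Kolchin: a subgroup $G\subseteq GL(n,\mathbb{C})$ all of whose elements are unipotent is, after a suitable change of basis, contained in the group $U_n$ of upper unitriangular matrices. Since $U_n$ is nilpotent of class at most $n-1$ and any subgroup of a nilpotent group is again nilpotent, this immediately yields the Fact. (Finiteness of the generating set is not actually needed here; the conclusion holds for any group of unipotent matrices over an algebraically closed field, and finite generation merely reflects the setting of the paper.)

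The heart of the argument is the following claim, which I would prove by induction on $n$: any group $G$ of unipotent $n\times n$ matrices over $\mathbb{C}$ has a common nonzero fixed vector, i.e. there is $0\neq v$ with $gv=v$ for all $g\in G$. To prove it, let $A\subseteq M_n(\mathbb{C})$ be the $\mathbb{C}$-subalgebra spanned by $G$. If $A$ acts reducibly on $\mathbb{C}^n$, restrict $G$ to a proper nonzero invariant subspace $W$: the restrictions remain unipotent, so by the inductive hypothesis $G|_W$ already has a common fixed vector, which is the desired $v$. If instead $A$ acts irreducibly, Burnside's theorem forces $A=M_n(\mathbb{C})$. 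Here the unipotence enters through the trace: every $g\in G$ has all eigenvalues $1$, so $\mathrm{tr}(g)=n$, and since $gh\in G$ is again unipotent, $\mathrm{tr}(gh)=n=\mathrm{tr}(h)$ for all $g,h\in G$. Hence $\mathrm{tr}\big((g-I)h\big)=0$ for every $h\in G$, and as $G$ spans $A=M_n(\mathbb{C})$ this gives $\mathrm{tr}\big((g-I)x\big)=0$ for all $x\in M_n(\mathbb{C})$. Nondegeneracy of the trace form then forces $g-I=0$, so $G=\{I\}$; but a trivial group cannot act irreducibly unless $n=1$, where the claim is obvious. This completes the induction.

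With the common fixed vector in hand, I would build a full $G$-invariant flag $0=V_0\subset V_1\subset\cdots\subset V_n=\mathbb{C}^n$ with $\dim V_i=i$ by a second induction: take $V_1=\langle v\rangle$, note that $G$ acts on the quotient $\mathbb{C}^n/V_1$ by unipotent operators (a quotient of a unipotent action is unipotent), apply the claim there to extend the flag, and pull back. Choosing a basis adapted to this flag exhibits every $g\in G$ as an upper triangular matrix with $1$'s on the diagonal, i.e. $G$ is conjugate into $U_n$, and hence $G$ is nilpotent.

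The main obstacle is the irreducible case of the key claim, where one must extract nilpotence-type information out of the bare unipotence hypothesis; the clean device is the trace computation above combined with Burnside's theorem. The only technical points to verify carefully are that unipotence is inherited by restrictions to invariant subspaces and by quotients (immediate from the block-triangular form of the action), and---were one to work over a non-algebraically-closed field rather than $\mathbb{C}$---that the common fixed space, being the solution set of the linear system $\{(g-I)v=0:g\in G\}$ defined over the ground field, is nonzero over the field as soon as it is nonzero over its algebraic closure. Over $\mathbb{C}$ these subtleties disappear.
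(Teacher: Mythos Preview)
Your argument is correct and is precisely the classical proof of Kolchin's theorem: the trace trick combined with Burnside's theorem disposes of the irreducible case, and the reducible case feeds the induction that produces a full invariant flag, placing $G$ inside the nilpotent group $U_n$.

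There is nothing to compare against, however: the paper does not prove this Fact at all. It is stated with a citation to Humphreys' \emph{Linear Algebraic Groups} and used only as background motivation for Conjecture~\ref{con:primitiveelementnilpotent}. So your proposal supplies a complete proof where the paper gives none; it is the standard one found in the cited reference.
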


In \cite{Platonov}, Platonov and Potapchik constructed the following $4\times 4$ matrices:
\begin{equation}\label{equation:PlatonovPotapchik}
  x=\begin{bmatrix}
    1 & 0 & 0 & 0\\
    0 & 1 & 0 & 0\\
    1 & 0 & 1 & 0\\
    0 & -1 & 0 & 1
\end{bmatrix}, y=\begin{bmatrix}
    1 & 1 & 0 & 0\\
    0 & 1 & 1 & 0\\
    0 & 0 & 1 & 1\\
    0 & 0 & 0 & 1
\end{bmatrix}.  
\end{equation}

\noindent The authors \cite{Platonov} noticed that all elements $x^ny$ for $n\in \mathbb{Z}$ are unipotent, but the group generated by $x$ and $y$ is not unipotent. Moreover, in the same paper \cite{Platonov}, the authors proved the following result:

\begin{theorem}\label{thm:pp} (\cite{Platonov})
For any non-abelian free group $\mathbb{F}_n$, $n\geq 3$, let $$\phi: Aut(\mathbb{F}_n)\to GL(m,\mathbb{C}),$$ be a finite-dimensional representation of $Aut(\mathbb{F}_n)$. Assume that all the images $\phi(x)$, where $x$ is a primitive element of $Inn(\mathbb{F}_n)$ are unipotent and the number of Jordan blocks in $\phi(x)$ does not exceed $n$, then the image $\phi(Inn(\mathbb{F}_n))$ is unipotent.    
\end{theorem}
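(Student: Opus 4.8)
The plan is to prove that every element of $\phi(Inn(\mathbb{F}_n))$ is unipotent; since $\phi(Inn(\mathbb{F}_n))$ is a finitely generated subgroup of $GL(m,\mathbb{C})$, the classical fact recalled in the introduction (a finitely generated linear group all of whose elements are unipotent is itself unipotent, hence conjugate into the upper unitriangular matrices) then yields the statement. Write $\gamma_g$ for conjugation by $g$, identify $Inn(\mathbb{F}_n)\cong\mathbb{F}_n$ via $g\mapsto\gamma_g$, and set $\psi:=\phi|_{Inn(\mathbb{F}_n)}$, viewed as a representation of $\mathbb{F}_n=\langle a_1,\dots,a_n\rangle$. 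Each $a_i$ is primitive, so by hypothesis every $\psi(\gamma_{a_i})$ is unipotent; the task is to upgrade this to all $\psi(\gamma_g)$.

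First I would use that $\phi$ is defined on all of $Aut(\mathbb{F}_n)$, not merely on $Inn(\mathbb{F}_n)$. Since $\alpha\gamma_p\alpha^{-1}=\gamma_{\alpha(p)}$ in $Aut(\mathbb{F}_n)$, applying $\phi$ gives $\phi(\alpha)\,\phi(\gamma_p)\,\phi(\alpha)^{-1}=\phi(\gamma_{\alpha(p)})$. By Fact~\ref{fact:primitivedefinition} every primitive $p$ is of the form $\alpha(a_1)$, so $Aut(\mathbb{F}_n)$ acts transitively on primitive elements; hence all the matrices $\phi(\gamma_p)$ with $p$ primitive are conjugate to $\phi(\gamma_{a_1})$ inside $GL(m,\mathbb{C})$. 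They therefore share one Jordan type: each is unipotent with the same number $d$ of Jordan blocks, and $\dim_{\mathbb{C}}\ker\!\big(\phi(\gamma_p)-I\big)=d\le n$ for every primitive $p$.

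Next I would manufacture many unipotent cosets. For $w\in\langle a_j:j\neq i\rangle$ the transvection fixing every generator except $a_i\mapsto a_iw$ is an automorphism, so $a_iw$ is primitive and $\psi(\gamma_{a_i})\psi(\gamma_w)$ is unipotent. Writing $K_{\widehat{\imath}}:=\overline{\langle \psi(\gamma_{a_j}):j\neq i\rangle}$ for the Zariski closure, the whole coset $\psi(\gamma_{a_i})\,K_{\widehat{\imath}}$ lies in the (Zariski-closed) unipotent variety. Put $H:=\overline{\psi(\mathbb{F}_n)}$ and pass to the reductive quotient $Q:=H/R_u(H)$: the unipotent radical is characteristic, the quotient map carries unipotents to unipotents, and the images $\overline{a_i}$ of the $\psi(\gamma_{a_i})$ are unipotent and Zariski-dense, so their projections to any central torus of $Q^{\circ}$ are trivial and $Q^{\circ}$ is semisimple. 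Each coset $\overline{a_i}\,\overline{K_{\widehat{\imath}}}$ still lies in the unipotent variety of $Q$, and it suffices to prove $Q$ is trivial, for then $H=R_u(H)$ is unipotent and Kolchin's theorem finishes the proof.

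The hard part is this last step, excluding a nontrivial semisimple $Q$, and it is exactly where the hypotheses $n\ge 3$ and ``$\le n$ Jordan blocks'' must enter. The engine I would use is the rank-one computation: if $u,v$ are unipotent in a subgroup isomorphic to $SL_2(\mathbb{C})$ with $uv^{k}$ unipotent for all $k\in\mathbb{Z}$, then $\operatorname{tr}(uv^{k})=2$ for all $k$ forces the interaction term to vanish, so $u,v$ share a fixed line and commute. Because $n-1\ge 2$, each $K_{\widehat{\imath}}$ is the closure of the image of a \emph{nonabelian} free group, large enough that the condition $\overline{a_i}\,K_{\widehat{\imath}}\subseteq\{\text{unipotents}\}$ propagates through the root $SL_2$'s of $Q$ to force the generators $\overline{a_1},\dots,\overline{a_n}$ to normalise a common Borel subgroup and fix a common flag, the uniform bound $d\le n$ being what keeps these flags from degenerating and makes the dimension bookkeeping close. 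Since a Zariski-dense subgroup of a semisimple group fixes no flag, $Q^{\circ}$ and then $Q$ are trivial. This rank-one reduction with the accompanying flag count is the main obstacle; the $n=2$ matrices $x,y$ of \eqref{equation:PlatonovPotapchik} show that it genuinely breaks down without $n\ge3$, since there $K_{\widehat{1}}=\overline{\langle y\rangle}$ is only one-dimensional and abelian.
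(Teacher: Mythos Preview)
The paper does not prove this theorem at all: it is quoted from \cite{Platonov} with attribution and no argument, and is used only as background motivation for Conjecture~\ref{thm:platonovconjecture}. There is therefore no ``paper's own proof'' to compare your proposal against.

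Evaluating your proposal on its own merits: the first two moves are sound and are indeed the natural opening. The observation that $\alpha\gamma_p\alpha^{-1}=\gamma_{\alpha(p)}$ together with transitivity of $Aut(\mathbb{F}_n)$ on primitives forces all $\phi(\gamma_p)$ to share one Jordan type is correct and important. Likewise, the fact that $a_iw$ is primitive for every $w\in\langle a_j:j\neq i\rangle$ gives you the unipotency of the whole coset $\psi(\gamma_{a_i})K_{\widehat{\imath}}$, and passing to the Zariski closure and then the reductive quotient $Q=H/R_u(H)$ is the standard reduction.

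The genuine gap is exactly the one you name yourself: the final paragraph does not prove that $Q$ is trivial, it only gestures at an $SL_2$ computation and asserts that it ``propagates through the root $SL_2$'s'' to force a common fixed flag. The rank-one lemma you state is fine, but you give no mechanism for lifting it to a general semisimple $Q$, no explanation of how the images $\overline{a_i}$ land in root subgroups, and no place where the inequality $d\le n$ is actually invoked in a computation. Saying ``the uniform bound $d\le n$ being what keeps these flags from degenerating and makes the dimension bookkeeping close'' is a description of what a proof would need to accomplish, not a proof. As written, this is a plausible strategy outline whose hardest step is missing; to complete it you would need to carry out the Platonov--Potapchik argument (or an equivalent) in earnest, and that is the entire content of the theorem.
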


Theorem \ref{thm:pp} partially answers the following general conjecture posed by Platonov and Potapchik in the same paper \cite{Platonov}:

\begin{Conjecture}\label{thm:platonovconjecture}
For a given group $G=\langle S\rangle$, with a fixed finite generating set $S$, if all the primitive elements of $G$ are unipotent, then is the whole group $G$ generated by $S$ necessarily unipotent?  
\end{Conjecture}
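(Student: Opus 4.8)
The plan is to first translate the hypothesis into the language of the free group. By Fact \ref{fact:primitivedefinition} together with the Remark identifying the Schreier moves with generators of $\mathrm{Out}(\mathbb{F}_m)$, if $\pi\colon \mathbb{F}_m \twoheadrightarrow G$ is the canonical surjection sending the free basis to the fixed generating set $S$, then the primitive elements of $G$ are exactly the images $\pi(w)$ of the primitive words $w\in\mathbb{F}_m$. Thus the hypothesis reads: $\pi(w)$ is unipotent for every primitive word $w$, and the goal is to conclude that $\pi(w)$ is unipotent for every word $w$, i.e.\ that $G$ is unipotent. Recalling that $g\in GL(n,\mathbb{C})$ is unipotent iff its characteristic polynomial is $(t-1)^n$, equivalently iff $\operatorname{tr}(g^k)=n$ for all $k$, the conjecture becomes a statement about which trace conditions on primitive words propagate to all words.

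I would first settle the rank-two case $n=2$, which is exactly the content I expect Proposition \ref{prop:GL2R} to address. Since the determinant is a homomorphism and each generator $g_i=\pi(a_i)$ is primitive and hence unipotent, $\det\equiv 1$ on $G$, so $G\subseteq SL(2,\mathbb{C})$. Now I would invoke the Fricke--Vogt trace coordinates: for a pair $a,b$ the quantities $x=\operatorname{tr} a$, $y=\operatorname{tr} b$, $z=\operatorname{tr}(ab)$ determine the character, with $\operatorname{tr}[a,b]=x^2+y^2+z^2-xyz-2$ and the subgroup $\langle a,b\rangle$ reducible precisely when this equals $2$. Because $a_i$, $a_j$ and $a_ia_j$ are all primitive, their images have trace $2$, so $(x,y,z)=(2,2,2)$ and $\operatorname{tr}[\,\cdot\,]=2$; hence each two-generator subgroup $\langle g_i,g_j\rangle$ is reducible. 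The key elementary observation is that a nontrivial unipotent element of $SL(2,\mathbb{C})$ fixes a \emph{unique} line, so pairwise reducibility forces all the $g_i$ to share one common invariant line; in an adapted basis every $g_i$ is upper unitriangular, so $G$ lies in the unitriangular group and is therefore unipotent.

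For the general statement I would push this scheme upward. Recast unipotency of $\pi(w)$ as the vanishing of the non-leading coefficients of its characteristic polynomial, and try to show that the trace data attached to primitive words cut out, inside the $GL(n,\mathbb{C})$-character variety of $\mathbb{F}_m$, precisely the locus where every word is unipotent (equivalently, by the Kolchin theorem recalled in the introduction, where the image is conjugate into the unitriangular group). Concretely, one would attempt to bootstrap simultaneous triangularizability of the generators from the constraints that primitivity of $a_ia_j$, $a_ia_j^2$, $a_ia_ja_k$, and similar words impose on the mixed traces, extending the rank-two argument.

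The step I expect to be the genuine obstacle is the control of the unipotent Jordan structure for $n\geq 3$. In rank two the whole argument hinges on a nontrivial unipotent having a unique invariant line, which is exactly what lets pairwise reducibility assemble into a common flag. This uniqueness fails completely for larger $n$: a unipotent matrix may possess many invariant subspaces, and the traces of primitive words alone need not determine the number or sizes of its Jordan blocks. This is precisely the phenomenon that forces the extra hypothesis in Theorem \ref{thm:pp}, where Platonov and Potapchik must assume that the number of Jordan blocks of each primitive image does not exceed the rank. Absent such a bound, I do not expect the trace conditions coming from primitive words to pin down the semisimple parts of the non-primitive words --- the matrices \eqref{equation:PlatonovPotapchik} already illustrate how delicately a family of unipotent elements can sit inside a non-unipotent group --- so a complete proof in general would require a new idea for reconstructing the Jordan data of arbitrary words from that of the primitive ones.
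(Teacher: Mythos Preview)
The statement you are attempting to prove is labelled \textbf{Conjecture} in the paper, and the paper offers no proof of it whatsoever. It is explicitly presented as an open question of Platonov and Potapchik (see the discussion surrounding Theorem~\ref{thm:pp} and Conjecture~\ref{thm:platonovconjecture}), and the entire paper is organised as a collection of dynamical analogues and partial results \emph{motivated by} this conjecture, not as a resolution of it. So there is no ``paper's own proof'' to compare against.

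That said, a few comments on the content of your sketch. Your $2\times 2$ argument via the Fricke trace identity is essentially correct and is a nice observation: from $\operatorname{tr}(g_i)=\operatorname{tr}(g_j)=\operatorname{tr}(g_ig_j)=2$ one gets $\operatorname{tr}[g_i,g_j]=2$, hence pairwise reducibility, and the uniqueness of the fixed line of a nontrivial unipotent in $SL(2,\mathbb{C})$ glues these into a common flag. However, your reference to Proposition~\ref{prop:GL2R} is misplaced: that proposition concerns the action of $GL(2,\mathbb{R})$ on $\mathbb{R}P^1$ and produces a \emph{negative} answer to Question~\ref{thm:firstlevelgenerators} about fixed points of first-step primitives; it has nothing to do with the unipotent conjecture in $GL(2,\mathbb{C})$.

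For $n\geq 3$ you correctly diagnose the obstruction --- the loss of uniqueness of invariant subspaces for unipotents with several Jordan blocks --- and you rightly point to the extra Jordan-block hypothesis in Theorem~\ref{thm:pp} as evidence that trace data on primitive words alone may not suffice. But at that point your proposal is no longer a proof attempt; it is an explanation of why the problem is hard, which is exactly why the paper records it as a conjecture rather than a theorem.
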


\subsection{Girth of groups}
The girth of a finite graph is defined as the length of the shortest non-trivial cycle in it. The girth of graphs is studied in combinatorics, number theory, and graph theory. For any given positive integer $n$ and $d\geq 2$, Erd\"{o}s and Sachs proved the existence of $(d, n)$-graphs. Finding the smallest such graph is still an active area of research. In a similar spirit, Schleimer \cite{S} introduced the notion of the girth of groups. For a finitely generated group $G=\langle S | R\rangle$, the girth of the group $G$ denoted $girth(G)$, is defined as $$girth(G)=\sup_{S\subset G, \langle S\rangle=G}\bigg \{girth(Cay(G,S))\hspace{0.1cm}\bigg |\hspace{0.1cm} |S|<\infty \bigg \},$$ where $girth(Cay(G,S))$ is the girth of the Cayley graph of the group $G$ with respect to finite generating set $S$.  A substantial amount of work on girth of groups was done by Akhmedov \cite{azer1}, \cite{azer2} including the notion of Girth Alternative and he proved it for certain classes of finitely generated groups including linear groups not isomorphic to $\mathbb{Z}$, word hyperbolic groups, one relator groups, etc. Later on, Yamagata and Nakamura studied the Girth Alternative for subgroups of of \mbox{Out}$(F_n)$ \cite{Y} and for mapping class groups \cite{N} respectively. Recently, Akhmedov and the author of this paper introduced the girth alternative for a subclass of HNN extensions and amalgamated free products of finitely generated groups and also produced counterexamples to show that the alternative fails in the general class of HNN extensions (and for amalgamated free products respectively) \cite{AM}.
\medskip

Motivated by Conjecture \ref{thm:platonovconjecture}, in this paper we consider the following notion of `Schreier girth' which we learned from A. Akhmedov (Definition \ref{definition: Schreiergirth}). In a similar spirit, we extend the concept of Schreier girth to `Nielsen girth' (in Section \ref{section:NielsenGirth}). In Proposition \ref{prop:finiteNgirth}, we produced a class of groups with finite Nielsen girth but having infinite girth.

\begin{definition}\label{definition: Schreiergirth}
    We define the $k$-step Schreier girth of a group $G$, denoted $\mathcal{S}_k$-girth$(G)$ as follows $$\mathcal{S}_k\text{-girth}(G)=\inf_{\langle S \rangle=G,|S|=k}\bigg \{\sup_{\langle S' \rangle=G, S\sim_{\mathcal{S}} S'}\hspace{0.1in}\text{girth (Cay}(G,S'))\bigg \}$$ where $S\sim_{\mathcal{S}} S'$ means the generating set $S'$ can be obtained from $S$ by applying finitely many Schreier transformations and for $k\geq d(G)$, where $d(G)$ is the cardinality of a minimal generating set of $G$.
\end{definition}
Before stating the main results of this paper, we first need to address why the Questions \ref{thm:holder}-\ref{thm:firstlevelgenerators} makes sense. To understand this, let $V$ be a finite-dimensional vector space over an arbitrary field $\mathbb{K}$ of dimension $n$. A chain $V_0\subseteq V_1\subseteq \dots V_{i-1}\subseteq V_i$  of subspaces is called a flag if $V_0 \subsetneq V_1\subsetneq \dots \subsetneq V_i$, i.e., each subspace is proper. This chain is called a complete flag if $i = n$ and dim $V_i = i, 0\leq i\leq n$ (thus $V_0 = 0$ and $V_n = V$). The set of all flags of $V$ will be denoted as $\mathcal{F}(V)$. $\mathcal{F}(V)$ has a structure of an algebraic variety over $\mathbb{K}$. One can put more structures on  $\mathcal{F}(V)$. For example, if $\mathbb{K}$ is a local field (such as $\mathbb{R}, \mathbb{C}$, and $\mathbb{Q}_p$), then the flag variety  $\mathcal{F}(V)$ can be a given a structure of a topological space. Let $\mathcal{F}_c(V)$ be the subspace of $\mathcal{F}(V)$ consisting of complete flags. 
 \medskip

Now, if a group $G$ acts on $V$ by linear transformations, then it also acts on $\mathcal{F}(V)$ as well as on $\mathcal{F}_c(V)$. Also, $g\in G$ has a fixed point in $\mathcal{F}_c(V)$ if and only if $g$ is a matrix representation in the upper triangular form. We can still ask Questions \ref{thm:holder}-\ref{thm:firstlevelgenerators} for this action. Let us note that if $\mathbb{K}$ is algebraically closed, then by Jordan Form Theorem, every element fixes a point (thus Questions \ref{thm:holder}-\ref{thm:firstlevelgenerators} are trivially resolved). However, these questions are still interesting if  $\mathbb{K}$ is not algebraically closed. In the case of $\mathbb{R}$, we discuss this question below in Section \ref{sec:GLactsRP1}.
\medskip

For every complete flag $\alpha $, we define $W(\alpha ) = \displaystyle \mathop{\oplus}_{0\leq i\leq n-1}V_{i+1}/V_i$ where $\alpha $ is represented by the chain $V_0\subseteq V_1\subseteq \dots V_{n-1}\subseteq V_n$, and let $\Omega (V) = \{(\alpha , W(\alpha )) : \alpha \in  \mathcal{F}_c(V)\}$. Then $G$ acts naturally on $\Omega (V) $, and an element $g\in G$ fixes a point if and only if it is unipotent. 
\medskip

The question studied by Platonov and Potapchik (Conjecture \ref{thm:platonovconjecture}) can now be formulated as Question \ref{thm:Q2} for the action of $G$ on $\Omega (V)$ or even for general group actions.  
\medskip

\subsection{Statement of main results} Here are the statements of the main results in this paper:

\begin{proposition}\label{prop:finiteNgirth}
For $n\geq 2$, the group $\Gamma_n$ defined as 
\begin{equation}
       \Gamma_n=  \langle a,b|[a,b]^n=1 \rangle,
    \end{equation}
has finite $\mathcal{N}_2{\text{-}girth}$ but $girth(\Gamma_n)=\infty $.  
\end{proposition}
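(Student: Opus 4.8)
The plan is to prove the two halves separately: that $girth(\Gamma_n)=\infty$, and that $\mathcal{N}_2\text{-girth}(\Gamma_n)\le 4n$.

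For the infinite girth, I would first record the structural features of $\Gamma_n$. Since $n\ge 2$, the defining relator $[a,b]^n$ is a proper $n$-th power of the cyclically reduced, non-proper-power word $[a,b]$, so $\Gamma_n$ is a one-relator group with torsion (the element $[a,b]$ has order $n$); by B.B. Newman's spelling theorem such groups admit a Dehn algorithm and are therefore word-hyperbolic. The abelianization of $\Gamma_n$ is $\mathbb{Z}^2$, so $\Gamma_n$ is not virtually cyclic, hence non-elementary; a non-elementary hyperbolic group contains a non-abelian free subgroup and so is not virtually solvable. Because one-relator groups lie in a class for which Akhmedov established the Girth Alternative \cite{azer1,azer2}, and $\Gamma_n$ is not virtually solvable, the alternative forces $girth(\Gamma_n)=\infty$.

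The finite $\mathcal{N}_2$-girth rests on a single elementary observation about how the commutator of a generating pair behaves under the moves. For an arbitrary two-generated group $G=\langle g_1,g_2\rangle$, I would check by direct computation that each elementary move carries $[g_1,g_2]$ to a conjugate of $[g_1,g_2]^{\pm1}$: switching gives $[g_2,g_1]=[g_1,g_2]^{-1}$, inverting gives $[g_1^{-1},g_2]=g_1^{-1}[g_1,g_2]^{-1}g_1$, and multiplying gives $[g_1g_2,g_2]=[g_1,g_2]$. Since this property is preserved under composition (a conjugate of $[g_1,g_2]^{\pm1}$ maps again to a conjugate of $[g_1,g_2]^{\pm1}$), an induction on the number of moves shows that every pair $(u,v)$ obtained from $(g_1,g_2)$ satisfies $[u,v]=w[g_1,g_2]^{\pm1}w^{-1}$ for some $w\in G$; this is just the classical fact that $\mathrm{Aut}(\mathbb{F}_2)$ fixes the conjugacy class of $[x,y]^{\pm1}$, transported into $G$.

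Taking $G=\Gamma_n$ and $(g_1,g_2)=(a,b)$, any generating pair $\{u,v\}$ reached from $\{a,b\}$ by Nielsen moves then has $[u,v]$ conjugate to $[a,b]^{\pm1}$, whence $[u,v]^n$ is conjugate to $[a,b]^{\pm n}=1$ and so $[u,v]^n=1$ in $\Gamma_n$. As $\Gamma_n$ is non-cyclic, $u\ne v^{\pm1}$, so the word $(uvu^{-1}v^{-1})^n$ is freely and cyclically reduced of length $4n$; moreover $[u,v]\ne 1$ since it is conjugate to the order-$n$ element $[a,b]^{\pm1}$, so this word is a genuine nontrivial relator. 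Hence $girth(Cay(\Gamma_n,\{u,v\}))\le 4n$ for every Nielsen-equivalent pair. Taking the supremum over the Nielsen orbit of $\{a,b\}$ yields a value at most $4n$, and since $\mathcal{N}_2\text{-girth}(\Gamma_n)$ is the infimum of such suprema over two-element generating sets, we obtain $\mathcal{N}_2\text{-girth}(\Gamma_n)\le 4n<\infty$.

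I expect the main obstacle to be bookkeeping rather than depth: ensuring the commutator-invariance lemma is verified for exactly the list of moves defining the Nielsen girth in Section \ref{section:NielsenGirth}, and confirming that the relator produced for each $\{u,v\}$ is genuinely cyclically reduced and essential (so that it really witnesses a cycle of length $4n$ and does not collapse). The infinite-girth half is comparatively routine once hyperbolicity and non-elementarity are established; the only care needed there is to cite the correct form of the Girth Alternative and to confirm that $\Gamma_n$ is not virtually solvable.
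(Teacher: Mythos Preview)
Your proposal is correct and follows essentially the same line as the paper: the finite $\mathcal{N}_2$-girth is obtained from the commutator-invariance observation (the paper verifies the eight cases $[a_k b_k^{\pm1},b_k]$, $[b_k^{\pm1}a_k,b_k]$, $[a_k,a_k^{\pm1}b_k]$, $[a_k,b_ka_k^{\pm1}]$ directly, all yielding conjugates of $[a_k,b_k]$ rather than its inverse, since the paper's Nielsen moves exclude switching and inversion), and the infinite girth is obtained by showing $\Gamma_n$ is not virtually solvable and invoking Akhmedov's Girth Alternative for one-relator groups. The only divergence is in verifying non-virtual-solvability: the paper appeals to the explicit classification of virtually solvable one-relator groups ($\mathbb{Z}/n\mathbb{Z}$, $\mathbb{Z}$, $\mathbb{Z}^2$, the Klein bottle group, $BS(1,n)$), whereas you reach the same conclusion via hyperbolicity from Newman's spelling theorem---either route works and neither is materially shorter.
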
 

\begin{proposition}\label{prop: AffR}
    There exist finitely generated subgroups $\langle f,g \rangle \leq \mbox{Aff}_+(\mathbb{R})$ such that $Fix(s)\neq \emptyset$ for all first step primitive elements, that is $s\in \mathcal{P}_1$ but $Fix(h)=\emptyset$ for some $h \in \langle f,g\rangle$.  
\end{proposition}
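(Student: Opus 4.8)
The plan is to translate the whole question into a statement about the slope homomorphism of the affine group. Writing each element of $\mbox{Aff}_+(\mathbb{R})$ as $x\mapsto ax+b$ with $a>0$, let $\lambda\colon \mbox{Aff}_+(\mathbb{R})\to(\mathbb{R}_{>0},\times)$ be the slope homomorphism $\lambda(x\mapsto ax+b)=a$. The first step is the elementary fixed-point dichotomy: the map $x\mapsto ax+b$ has a (unique) fixed point $b/(1-a)$ whenever $a\neq1$, whereas for $a=1$ it fixes a point only if $b=0$. Hence $Fix(h)=\emptyset$ precisely when $h$ is a nontrivial translation, i.e. $\lambda(h)=1$ but $h\neq\mathrm{id}$. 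The proposition therefore reduces to exhibiting a pair $f,g$ for which no element of $\mathcal{P}_1$ is a nontrivial translation while the group $\langle f,g\rangle$ does contain one.

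Next I would make $\mathcal{P}_1$ explicit. Applying a single Schreier move to $S_0=\{f,g\}$ yields generating pairs whose entries lie among $f^{\pm1},g^{\pm1}$ and the short products $fg,\,gf,\,fg^{-1},\,gf^{-1}$; thus every $s\in\mathcal{P}_1$ is one of these finitely many words, and its pair of exponent sums $(E_f(s),E_g(s))$ is a nonzero vector with entries in $\{-1,0,1\}$. Since $\lambda$ is a homomorphism, $\lambda(s)=a^{E_f(s)}c^{E_g(s)}$ with $a=\lambda(f)$ and $c=\lambda(g)$.

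The choice I would then fix is $f(x)=2x$ and $g(x)=3x+1$, so $a=2$ and $c=3$. Because $2$ and $3$ are multiplicatively independent, $2^{p}3^{q}=1$ forces $p=q=0$; as every exponent-sum vector occurring in $\mathcal{P}_1$ is nonzero, each $s\in\mathcal{P}_1$ satisfies $\lambda(s)\neq1$ and hence has a fixed point. On the other hand $\lambda([f,g])=1$ automatically, and composing directly (under the convention $fg=f\circ g$) gives $[f,g]\colon x\mapsto x+1$, a nontrivial translation; this supplies the witness $h=[f,g]$ with $Fix(h)=\emptyset$, since $f$ and $g$ do not commute.

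The one place demanding genuine care — and the step I expect to be the main, though modest, obstacle — is verifying that the list above really exhausts $\mathcal{P}_1$ under the precise reading of $\mathcal{R}_1$ in Definition \ref{defn:primitive}, including checking that each one-move pair still generates $\langle f,g\rangle$ and has two distinct entries so that it is a bona fide member of $\mathcal{R}_1$. This is a finite bookkeeping check, and it is robustly in my favour: any word inadvertently omitted from the list would only weaken the hypothesis $Fix(s)\neq\emptyset$, and multiplicative independence of the slopes annihilates the slope-one condition for \emph{every} nonzero exponent-sum vector simultaneously, so that beyond confirming $(E_f,E_g)\neq(0,0)$ no further case analysis is needed.
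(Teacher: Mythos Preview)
Your proof is correct and follows essentially the same strategy as the paper: reduce the fixed-point question to the slope being $\neq 1$, pick generators with slopes $2$ and $3$, and exhibit the commutator as a nontrivial translation with empty fixed-point set. The paper carries this out via explicit matrix computations (and in fact checks the broader family $f^{n}g^{\pm 1},\,g^{n}f^{\pm 1}$ for all $n$, with the concrete choice $f(x)=2x+3$, $g(x)=3x+4$), whereas your appeal to the slope homomorphism and the multiplicative independence of $2$ and $3$ packages the same verification more succinctly and automatically covers that larger family as well.
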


\begin{proposition}\label{prop:abelian}
    For any finitely generated subgroup $G\leq SL(2,\mathbb{R})$, if all the first step primitive elements of $G$ fix a point in $\mathbb{H}^2$, then $G$ fixes a point in $\mathbb{H}^2$ and hence $G$ is abelian.
\end{proposition}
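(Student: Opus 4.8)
The plan is to pass from fixed points to traces via the upper half-plane model, where $SL(2,\mathbb{R})$ acts on $\mathbb{H}^2$ by M\"obius transformations. Here a nontrivial $g$ fixes an interior point of $\mathbb{H}^2$ exactly when it is elliptic, i.e. $|\mathrm{tr}(g)|<2$, whereas a parabolic ($|\mathrm{tr}(g)|=2$) or hyperbolic ($|\mathrm{tr}(g)|>2$) element fixes only boundary points, and $\pm I$ fixes everything. Thus the hypothesis "$\mathrm{Fix}(s)\cap\mathbb{H}^2\neq\emptyset$ for all $s\in\mathcal{P}_1$" is the finite collection of conditions "$|\mathrm{tr}(s)|<2$ or $s=\pm I$." Moreover the stabilizer of any interior point is a conjugate of the rotation group $SO(2)$, which is abelian; so the entire task reduces to exhibiting a single point of $\mathbb{H}^2$ fixed by all of $G$, after which both "$G$ fixes a point" and "hence $G$ is abelian" follow at once.

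Next I would make $\mathcal{P}_1$ explicit. Writing $S_0=\{s_1,\dots,s_m\}$ with $m=d(G)$, a single Schreier move applied to $S_0$ only ever introduces the letters $s_i^{\pm1}$ and one product $s_is_j$ (or, reading the third move in reverse, $s_is_j^{-1}$). Since $\mathrm{tr}(g)=\mathrm{tr}(g^{-1})$ and $\mathrm{tr}(s_is_j)=\mathrm{tr}(s_js_i)$, the ellipticity of every element of $\mathcal{P}_1$ collapses to: each $s_i$ is elliptic or central, and each product $s_is_j$ (and difference $s_is_j^{-1}$) is elliptic or central. The essential case is $m=2$, $S_0=\{a,b\}$, where the hypothesis becomes that $a$, $b$, $ab$, and $ab^{-1}$ all have trace in $(-2,2)$ (or equal $\pm I$).

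The core is then a two-generator lemma: \emph{if $a$ and $b$ are elliptic and the first-step products $ab$ and $ab^{-1}$ are also elliptic, then $a$ and $b$ share a fixed point in $\mathbb{H}^2$.} To attack it I would conjugate so that $a$ is a rotation about $i$, $a=\begin{pmatrix}\cos\theta & \sin\theta\\ -\sin\theta & \cos\theta\end{pmatrix}\in SO(2)$, and write $b$ as a general elliptic element parametrized by its fixed point $q\in\mathbb{H}^2$ and rotation angle $\psi$. Expressing $\mathrm{tr}(ab)$ and $\mathrm{tr}(ab^{-1})$ as explicit functions of $\theta,\psi,q$, and exploiting the Fricke relation $\mathrm{tr}(ab)+\mathrm{tr}(ab^{-1})=\mathrm{tr}(a)\,\mathrm{tr}(b)$ together with the commutator identity $\mathrm{tr}([a,b])=\mathrm{tr}(a)^2+\mathrm{tr}(b)^2+\mathrm{tr}(ab)^2-\mathrm{tr}(a)\,\mathrm{tr}(b)\,\mathrm{tr}(ab)-2$, one aims to show the simultaneous constraints force $q=i$. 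This is the step I expect to be the main obstacle: ellipticity of $a,b,ab$ alone manifestly does \emph{not} pin down $q$ (composing two small rotations about nearby distinct points leaves the product elliptic), so the argument must genuinely use the remaining first-step words and the displacement of $q$ from $i$ simultaneously, and the resulting inequalities are cleanest to certify by the explicit symbolic computation (as with Proposition \ref{prop:GL2R}, this is where Mathematica enters).

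Finally I would globalize. Each elliptic $s_i$ has a \emph{unique} interior fixed point, and any central $s_i=\pm I$ fixes every point; so the pairwise common fixed points supplied by the lemma must all coincide with the fixed point of, say, $s_1$, yielding a single point $p\in\mathbb{H}^2$ fixed by every generator and hence by $G$. Then $G\leq\mathrm{Stab}(p)$, a conjugate of $SO(2)$, so $G$ fixes a point of $\mathbb{H}^2$ and is abelian, which completes the proof.
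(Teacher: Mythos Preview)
Your two-generator lemma is false as stated, and this is not a technicality that more careful bookkeeping can repair. Take $a=\begin{pmatrix}0&1\\-1&0\end{pmatrix}$, the rotation about $i$ with $\theta=\pi/2$, and $b=D\,R_{\pi/6}\,D^{-1}$ with $D=\mathrm{diag}(\lambda,\lambda^{-1})$ chosen so that $\lambda^{2}+\lambda^{-2}=3$. Then $b$ is elliptic with fixed point $\lambda^{2} i\neq i$, while $\mathrm{tr}(a)=0$, $\mathrm{tr}(b)=\sqrt{3}$, and a direct computation gives $\mathrm{tr}(ab^{\pm1})=2\cos\theta\cos\eta\mp(\lambda^{2}+\lambda^{-2})\sin\theta\sin\eta=\mp\tfrac{3}{2}$. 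Thus $a,b,ab,ab^{-1}$ are all elliptic, yet $a$ and $b$ have distinct fixed points in $\mathbb{H}^{2}$. The Fricke identities you plan to invoke cannot rescue this: they relate the four traces algebraically but impose no inequality forcing one of them outside $(-2,2)$ when the fixed points differ.

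The underlying gap is your reading of $\mathcal{P}_1$. Throughout the paper (compare the proofs of Propositions~\ref{prop: AffR}, \ref{prop:GL2R} and \ref{prop:PLI}) the phrase ``first step primitive elements'' is used to cover the words $A^{n}B^{\pm1}$ and $B^{n}A^{\pm1}$ for \emph{all} integers $n$, not only $n=1$. With that larger family the paper's argument runs as follows: conjugate so that $A$ fixes $i$ and $B$ fixes $ri$, compute
\[
\mathrm{tr}(A^{n}B)=2\cos(n\theta-\eta)-(\lambda+\lambda^{-1})^{2}\sin(n\theta)\sin\eta,
\]
and then choose $n$ so that $n\theta-\eta$ lands near $\pi$ with $\sin(n\theta)\sin\eta>0$ (via density of $\{n\theta\bmod 2\pi\}$ when $\theta/\pi$ is irrational, and a direct argument otherwise). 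This drives the trace below $-2$ whenever $\lambda\neq 1$, contradicting the hypothesis and forcing a common fixed point. The freedom to let $n$ range over all of $\mathbb{Z}$ is exactly what your four-word hypothesis lacks, and without it the conclusion genuinely fails, as the example above shows.
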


\begin{proposition}\label{prop:GL2R}
    Consider the action of $GL(2,\mathbb{R})$ on $\mathbb{R}P^1$ by usual matrix multiplication. Then, there exists a finitely generated subgroup $\Gamma \leq GL(2,\mathbb{R})$ such that $Fix(s)\neq \emptyset$ for all $s\in \mathcal{P}_1$ but $Fix(g)=\emptyset$ for some $g\in \Gamma$.   
\end{proposition}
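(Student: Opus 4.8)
The plan is to translate the fixed-point question into elementary linear algebra. Under the action of $GL(2,\mathbb{R})$ on $\mathbb{R}P^1$ by $A\cdot[v]=[Av]$, a matrix $A$ fixes a point exactly when it has a real eigenvector, i.e.\ a real eigenvalue, which happens iff the discriminant $\Delta_A:=(\mathrm{tr}\,A)^2-4\det A$ is nonnegative. Hence $\mathrm{Fix}(A)=\emptyset$ iff $A$ is \emph{elliptic}, i.e.\ $(\mathrm{tr}\,A)^2<4\det A$. So the goal is to build $\Gamma=\langle f,g\rangle\leq GL(2,\mathbb{R})$ in which every element of $\mathcal{P}_1$ is non-elliptic while some word in $f,g$ is elliptic.

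First I would identify $\mathcal{P}_1$. Applying a single Schreier move to $S_0=\{f,g\}$ yields only the generating sets obtained by a swap, by inverting one generator, or by the product move, so every member of $\mathcal{P}_1$ lies among $f^{\pm1},g^{\pm1},fg,gf,fg^{-1},gf^{-1}$. The crucial simplification is that the sign of $\Delta_A$ is preserved under $A\mapsto A^{-1}$ (one computes $\Delta_{A^{-1}}=\Delta_A/(\det A)^2$) and under conjugation (which preserves $\mathrm{tr}$ and $\det$). Since $gf^{-1}=(fg^{-1})^{-1}$ and $gf=f^{-1}(fg)f$, it therefore suffices to arrange that the four matrices $f,g,fg,fg^{-1}$ are non-elliptic.

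Next I would produce explicit generators, working inside $SL(2,\mathbb{R})\subseteq GL(2,\mathbb{R})$ where $\det=1$ and ``non-elliptic'' means $|\mathrm{tr}|\geq 2$. The traces $x=\mathrm{tr}\,f$, $y=\mathrm{tr}\,g$, $z=\mathrm{tr}(fg)$ can be prescribed freely and realized concretely (e.g.\ take $f$ diagonal with $\lambda+\lambda^{-1}=x$ and solve linearly for the entries of $g$), and they control the remaining traces through $\mathrm{tr}(fg^{-1})=xy-z$ and the Fricke identity $\mathrm{tr}[f,g]=x^2+y^2+z^2-xyz-2$. Choosing $x=y=3$ and $z=5/2$ makes $f,g,fg$ hyperbolic and gives $\mathrm{tr}(fg^{-1})=13/2$ (hyperbolic), while $\mathrm{tr}[f,g]=z^2-9z+16=-1/4\in(-2,2)$, so the commutator $[f,g]=fgf^{-1}g^{-1}$ is elliptic. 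These trace inequalities, and the explicit matrix entries solving for them, are exactly the computations I would confirm in Mathematica.

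Finally I would conclude: every element of $\mathcal{P}_1$ is non-elliptic and hence fixes a point of $\mathbb{R}P^1$, whereas $[f,g]$ is elliptic, so $\mathrm{Fix}([f,g])=\emptyset$. In particular $[f,g]$ cannot equal any element of $\mathcal{P}_1$, since an elliptic and a non-elliptic matrix are never equal, so $[f,g]\in\Gamma$ is a genuine witness lying outside $\mathcal{P}_1$. I expect the main obstacle to be the simultaneous satisfaction of the trace constraints—keeping $f,g,fg,fg^{-1}$ in the non-elliptic range while forcing $\mathrm{tr}[f,g]$ into the open interval $(-2,2)$—which is where the parameter search and the Mathematica check carry the load; once the matrices are fixed, the elliptic/non-elliptic dichotomy settles membership in $\mathcal{P}_1$ for free.
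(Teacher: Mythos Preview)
Your argument is correct under the literal reading of the paper's formal definition of $\mathcal{P}_1$: a single Schreier move applied to $\{f,g\}$ produces only the elements $f^{\pm1},g^{\pm1},fg,gf$ (your inclusion of $fg^{-1},gf^{-1}$ is harmless overcaution). However, the paper's own proof of this proposition---and the parallel proofs of Propositions~\ref{prop: AffR} and~\ref{prop:PLI}---treat the ``first step primitive elements'' as the full infinite family $\{f^n g^{\pm1},\, g^n f^{\pm1}: n\in\mathbb{Z}\}$, which is much larger than what the formal definition yields. There is thus a mismatch between the stated definition and the paper's usage; your proof handles the former while the paper's proof handles the latter.

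Methodologically your route is also different and considerably cleaner: you work in $SL(2,\mathbb{R})$, parametrise by the trace triple $(x,y,z)$, and invoke the Fricke identity $\mathrm{tr}[f,g]=x^2+y^2+z^2-xyz-2$ to force the commutator elliptic while keeping $f,g,fg,fg^{-1}$ hyperbolic. The paper instead picks explicit numerical matrices in $GL(2,\mathbb{R})$ with $\det\neq 1$, writes $\mathrm{Tr}(A^mB)$ and $\mathrm{Det}(A^mB)$ as closed-form exponential sums (via Mathematica), and argues by dominance of exponential terms that the discriminant stays nonnegative for every integer $m$; the commutator is then checked numerically. Your approach is hand-verifiable and conceptually transparent; the paper's covers the infinite family $A^mB^{\pm1}$ directly. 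If you wish your example to meet the paper's intended scope, add the observation that $a_m:=\mathrm{tr}(f^m g)$ satisfies the linear recursion $a_{m+1}=x\,a_m-a_{m-1}$ with $a_0=3$, $a_1=5/2$, whose minimum over $m\in\mathbb{Z}$ is $a_1=5/2>2$, together with $\mathrm{tr}(f^m g^{-1})=\mathrm{tr}(f^{-m}g)$; by the $x=y$ symmetry the same holds for $g^m f^{\pm1}$, so your pair in fact also satisfies the broader requirement.
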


Let $f\in \mbox{PL}_+(I)$, where $\mbox{PL}_+(I)$ denote the group of orientation-preserving piecewise linear homeomorphisms of the interval $[0,1]$. The $C_0$-norm in $\mbox{PL}_+(I)$ is defined as $$||f||=\max_{x\in [0,1]}|f(x)-x|.$$A subgroup $G\leq \mbox{PL}_+(I)$ is $C_0$-dense if $G$ is dense in the $C_0$ topology in $\mbox{PL}_+(I)$. For example, Thompson's group $F$ is a $C_0$-dense subgroup of $\mbox{PL}_+(I)$.

\begin{proposition}\label{prop:PLI}
 There exists a finitely generated subgroup $\Gamma\leq \mbox{PL}_+(I),$ such that all the first step primitive elements fix a point in $(0,1)$, but there is a group element $g\in \Gamma$ such that $Fix(g)=\emptyset$.
\end{proposition}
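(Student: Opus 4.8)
The plan is to realize $\Gamma=\langle a,b\rangle$ using two explicit piecewise-linear bump functions whose supports overlap but which fix disjoint closed subintervals of $(0,1)$. First I would pin down $\mathcal{P}_1$ concretely. For the ordered generating set $S_0=\{a,b\}$, a single Schreier move produces only the sets $\{a^{-1},b\}$, $\{a,b^{-1}\}$, $\{ab,b\}$, $\{a,ba\}$ (together with reorderings), all of which generate $\Gamma$, so the first-step primitive elements are exactly $\mathcal{P}_1=\{a,a^{-1},b,b^{-1},ab,ba\}$. Since $Fix(a)=Fix(a^{-1})$, $Fix(b)=Fix(b^{-1})$, and $ba=a^{-1}(ab)a$ is conjugate to $ab$ (so that $Fix(ba)=a^{-1}Fix(ab)$ is interior whenever $Fix(ab)$ is), it suffices to arrange that $a$, $b$, and $ab$ each fix a point of $(0,1)$.

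Next I would build the generators so that $a$ only pushes points up and $b$ only pushes points down. Concretely, I would fix $0<q<p<1$ and take $a\in\mbox{PL}_+(I)$ with $a(x)>x$ on $(0,p)$ and $Fix(a)=\{0\}\cup[p,1]$, and $b\in\mbox{PL}_+(I)$ with $b(x)<x$ on $(q,1)$ and $Fix(b)=[0,q]\cup\{1\}$; such piecewise-linear bumps plainly exist. Then $a,a^{-1}$ fix every point of $[p,1)$ and $b,b^{-1}$ fix every point of $(0,q]$, so these four elements fix interior points. For $ab$, with the convention $(ab)(x)=a(b(x))$, I would invoke the intermediate value theorem: on $(0,q)$ one has $b=\mathrm{id}$, hence $ab(x)=a(x)>x$, while for $x$ near $1$ one has $b(x)<x$ with $b(x)\in[p,1]$, hence $ab(x)=b(x)<x$; continuity then forces a fixed point $x_\ast\in(q,1)$, and the conjugacy $ba=a^{-1}(ab)a$ transfers the same conclusion to $ba$.

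Finally I would take $g=ab^{-1}$, which is not among the six elements of $\mathcal{P}_1$. Because $b^{-1}$ pushes up and $a$ pushes up, one has $g(x)=a(b^{-1}(x))\geq b^{-1}(x)\geq x$ for every $x\in(0,1)$, with equality throughout forcing $x\in Fix(a)\cap Fix(b)$; since $q<p$ gives $Fix(a)\cap Fix(b)\cap(0,1)=\emptyset$, I conclude $g(x)>x$ on all of $(0,1)$, so $Fix(g)\cap(0,1)=\emptyset$ as required.

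The main obstacle is not the analytic verification, which becomes routine once the push-up/push-down dichotomy is installed, but rather the bookkeeping that guarantees $\mathcal{P}_1$ is precisely the six listed elements and that \emph{every} one of them, not merely the generators, fixes an interior point. The delicate case is $ab$, where the two bumps genuinely interact on the overlap $[q,p]$; the sign-change argument above is exactly what lets that case go through without any explicit computation of the composed piecewise-linear map.
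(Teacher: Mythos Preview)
Your argument is correct under the paper's formal definition of $\mathcal{P}_1$: with the single Schreier moves listed there, one indeed gets $\mathcal{P}_1=\{a,a^{-1},b,b^{-1},ab,ba\}$, and your push-up/push-down pair handles all six elements while forcing $ab^{-1}$ to be fixed-point free on $(0,1)$. This is considerably simpler and more transparent than what the paper does.

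There is one caveat you should be aware of. The paper's own proof---and its proofs of the parallel propositions for $\mbox{Aff}_+(\mathbb{R})$ and for $GL(2,\mathbb{R})$ acting on $\mathbb{R}P^1$---consistently treats ``first step primitive elements'' as the much larger family $f^ng^{\pm 1}$, $g^nf^{\pm 1}$ for all nonzero integers $n$; indeed, Claim~1 in the paper's proof of this very proposition checks that $g^{-l}f^n$ has an interior fixed point for \emph{all} nonzero $n,l$. Under that intended (if not formally stated) reading your witness $ab^{-1}=a^{1}b^{-1}$ is itself first-step primitive, and the construction collapses: you would need every $a^nb^{\pm 1}$ and $b^na^{\pm 1}$ to have an interior fixed point, which your $a,b$ already fail for $ab^{-1}$. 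The paper instead builds a more elaborate pair $f,g$, verifies the fixed-point property for all $g^{-l}f^n$ via the intermediate value theorem, and then exhibits a fixed-point-free element of a genuinely different shape, namely $(g_1g_2)^kf$ with $g_1=f^{n}gf^{-n}$, $g_2=f^{-n}gf^{n}$ for $n\gg 1$.

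So: your proof is valid for the statement as literally written and is the cleaner route; but the paper evidently intends the stronger assertion, and for that your two-bump construction is insufficient and something closer to the paper's configuration is required.
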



\begin{proposition}\label{prop:primitivefree}
    There exists a subgroup $\Gamma\leq \mbox{Homeo}_+(\mathbb{R})$ such that all the primitive elements of $\Gamma$ act freely but the entire group doesn't act freely. 
\end{proposition}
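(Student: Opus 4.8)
The plan is to realize a suitable finitely generated nilpotent group as a group of orientation-preserving homeomorphisms of $\mathbb{R}$ through the dynamical realization of a carefully chosen left order, arranging the order so that its central elements become ``infinitesimal.'' Concretely, I would take $\Gamma$ to be a faithful action of the integral Heisenberg group $H=\langle a,b\mid [a,[a,b]]=[b,[a,b]]=1\rangle$, whose center is generated by $c=[a,b]$ and whose abelianization is $H/[H,H]\cong\mathbb{Z}^2$, with projection $\pi\colon H\to\mathbb{Z}^2$. The crucial algebraic observation is that with $S_0=\{a,b\}$ (so $m=d(H)=2$), every generating set occurring in $\bigcup_n\mathcal{R}_n$ has exactly two elements and generates $H$; projecting to $\mathbb{Z}^2$ shows that any such pair maps to a basis of $\mathbb{Z}^2$. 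Hence every primitive element has nonzero image under $\pi$, i.e.\ lies outside the center $\langle c\rangle$, whereas $c$ itself is non-primitive.

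Next I would build the left order. Fix an irrational $\theta$ and order $\mathbb{Z}^2$ by the Archimedean order induced by the injection $(i,j)\mapsto i+j\theta\in\mathbb{R}$; then every nonzero vector is cofinal. Combine this lexicographically with the standard order on the central $\langle c\rangle\cong\mathbb{Z}$, declaring $w>1$ iff $\pi(w)>0$ in $\mathbb{Z}^2$, or $\pi(w)=0$ and the $c$-exponent of $w$ is positive. Because the extension $1\to\langle c\rangle\to H\to\mathbb{Z}^2\to 1$ is central, this positive cone is multiplicatively closed and defines a left-invariant order on $H$, which I would verify by checking the cases for $\pi(g),\pi(h)$ directly. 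Let $\Gamma\leq\mathrm{Homeo}_+(\mathbb{R})$ be the image of the dynamical realization of $(H,<)$, which is a faithful action by orientation-preserving homeomorphisms.

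I would then invoke the standard dictionary for dynamical realizations: an element acts without fixed points on $\mathbb{R}$ if and only if it is cofinal for the order. For $g$ with $\pi(g)=v\neq 0$, the Archimedean property gives that $\pi(g^n)=nv$ eventually dominates, and $\pi(g^{-n})$ is eventually dominated by, the image of any fixed $h$, so $g$ is cofinal and hence fixed-point free; in particular every primitive element acts freely. By contrast $\pi(c^n)=0$ for all $n$, so $c^n<a$ for every $n\in\mathbb{Z}$, meaning $c$ is bounded above and not cofinal; thus $c$ has a fixed point and $\Gamma$ does not act freely. (Alternatively, the non-abelianity of $\Gamma$ together with H\"older's theorem already forbids a free action.)

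The main obstacle is the interface step: showing that the chosen order genuinely makes every non-central element cofinal while keeping $c$ non-cofinal, and correctly applying the cofinal $\Leftrightarrow$ fixed-point-free equivalence in the dynamical realization. The algebraic input (that primitives avoid the center) is immediate, but one must ensure the quotient order is truly Archimedean---forcing $\theta\notin\mathbb{Q}$ for injectivity---and that lexicographically appending the center does not accidentally render some non-central element non-cofinal. This is precisely why the order on the quotient $\mathbb{Z}^2$, rather than a naive lexicographic order on $\mathbb{Z}^2$, must be Archimedean, and why the center must sit at the bottom of the lexicographic order.
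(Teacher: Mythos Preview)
Your proposal is correct and follows essentially the same approach as the paper: both build a two-generated non-abelian left-orderable group whose abelianization is $\mathbb{Z}^2$, equip $\mathbb{Z}^2$ with the Archimedean order coming from an irrational slope, extend lexicographically to the whole group, pass to the dynamical realization, and observe that primitive elements project nontrivially to $\mathbb{Z}^2$ and hence are cofinal (``dominant'') and act freely, while the commutator does not. The only cosmetic difference is the specific group: the paper works with a semidirect product $\mathbb{Z}^2\ltimes G$, whereas you use the Heisenberg group, a non-split central extension of $\mathbb{Z}^2$ --- but the mechanism is identical, and your alternative appeal to H\"older's theorem for the non-freeness of the commutator is exactly what the paper does as well.
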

In Section \ref{section:NielsenGirth}, we develop the study of Nielsen girth and Schreier girth. Moreover, we produce a class of groups with finite Nielsen girth but having infinite girth in Propositions \ref{prop:finiteNgirth}. In Proposition \ref{prop: AffR}, \ref{prop:abelian}, \ref{prop:GL2R}, and \ref{prop:PLI}, we investigate Questions \ref{thm:holder}-\ref{thm:firstlevelgenerators} for various group actions.

\section{Nielsen Girth}\label{section:NielsenGirth}
For a given generating set $S=\{s_1,\dots , s_n\}$ of a group $G$, one can apply the following \textit{Nielsen moves} to obtain a new generating set $S'$ of $G$ as follows:
\begin{itemize}
    \item Replacing $s_k$ with $s_j^{\pm}s_k$ for some $k\neq j$ and $j,k\in \{1,2,\dots, n\}$ i.e., $$S=\{s_1,s_2,\dots,s_k,\dots, s_n\} \sim_N \{s_1,s_2,\dots, s_j^{\pm}s_k,\dots,s_n\}=S'.$$
    \item Replacing $s_k$ with $s_ks_j^{\pm}$ i.e., $$S=\{s_1,s_2,\dots,s_k,\dots, s_n\} \sim_N \{s_1,s_2,\dots, s_ks_j^{\pm},\dots,s_n\}=S'.$$
\end{itemize} 

We write $S' \sim_N S$ to denote that $S'$ is obtained from $S$ by applying finitely many Nielsen move. The switching of any two elements from the generating set is not allowed in Nielsen moves. Motivated by the notion of Schreier girth of a group defined in the earlier section, here we define the Nielsen girth as follows
\begin{definition}
    The $k$-step Nielsen girth of a group $G$, denoted $\mathcal{N}_k$-girth$(G)$ is defined as, $$\mathcal{N}_k\text{-girth}(G)=\inf_{\langle S \rangle=G,|S|=k}\bigg \{\sup_{\langle S' \rangle=G, S\sim_N S'}\hspace{0.1in}\text{girth (Cay}(G,S'))\bigg \},$$ $k\geq d(G)$, where $d(G)$ is the cardinality of the minimal generating set of $G$.
\end{definition}

\begin{remark}
    A key difference between Nielsen girth and Schreier girth is that if $S \sim_N S',$ then for any distinct $x,y\in S$, and $x',y'\in S'$, the commutator $[x', y']$ is either the same as $[x,y]$ or it is conjugate to $[x,y]$ by some element in $G$. 
\end{remark}

Here are some immediate corollaries:
\begin{corollary}\label{cor:2}
    Given a finitely generated group $G=\langle S \rangle$, the following implications holds $$\mathcal{N}_k\text{-girth}(G)=\infty \implies \mathcal{S}_k\text{-girth}(G)=\infty \implies girth(G)=\infty.$$ for every $k\geq d(G),$ where $d(G)$ is the cardinality of the minimal generating set $S$ of $G$.
\end{corollary}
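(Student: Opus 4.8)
The plan is to prove the two implications separately and to reduce both to a single structural observation: for a fixed generating set $S$ with $|S| = k$, the collection of generating sets reachable from $S$ by Nielsen moves is contained in the collection reachable by Schreier moves, and every generating set in the latter collection is a finite generating set of $G$ of the same cardinality $k$. Granting this, the first implication follows by monotonicity of suprema and infima, and the second by unwinding the extended-real infimum in the definition of $\mathcal{S}_k\text{-girth}(G)$.

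For the first implication I would show that each of the four Nielsen moves can be realized as a finite composition of Schreier moves, so that $S \sim_N S'$ implies $S \sim_{\mathcal{S}} S'$. The right-multiplication move $s_k \mapsto s_k s_j$ is literally a Schreier move of the third type, and $s_k \mapsto s_k s_j^{-1}$ is obtained by first inverting the $j$-slot, then right-multiplying, then inverting the $j$-slot back. For the left-multiplication moves I would use the identity $s_j^{\pm} s_k = (s_k^{-1} s_j^{\mp})^{-1}$: to realize $s_k \mapsto s_j s_k$, invert the $k$-slot to $s_k^{-1}$, invert the $j$-slot, right-multiply to reach $s_k^{-1} s_j^{-1}$, restore the $j$-slot, and finally invert the $k$-slot to obtain $(s_k^{-1} s_j^{-1})^{-1} = s_j s_k$; the case $s_k \mapsto s_j^{-1} s_k$ is analogous, using $s_k^{-1} s_j$. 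Since each intermediate set differs from its predecessor by a single Schreier move, and each such move preserves both the generated subgroup and the cardinality of the generating set, these are legitimate realizations. This yields the containment $\{\, S' : S \sim_N S' \,\} \subseteq \{\, S' : S \sim_{\mathcal{S}} S' \,\}$, so $\sup_{S \sim_N S'} girth(Cay(G,S')) \leq \sup_{S \sim_{\mathcal{S}} S'} girth(Cay(G,S'))$ for each $S$; taking the infimum over all $S$ with $|S| = k$ gives $\mathcal{N}_k\text{-girth}(G) \leq \mathcal{S}_k\text{-girth}(G)$, whence $\mathcal{N}_k\text{-girth}(G) = \infty$ forces $\mathcal{S}_k\text{-girth}(G) = \infty$.

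For the second implication I would use that $k \geq d(G)$ guarantees at least one generating set $S$ with $|S| = k$, and that $\mathcal{S}_k\text{-girth}(G) = \infty$ forces the inner supremum $\sup_{S \sim_{\mathcal{S}} S'} girth(Cay(G,S'))$ to equal $\infty$ for every such $S$. Fixing one $S$, every $S'$ appearing in that supremum is itself a finite generating set of $G$, so $girth(Cay(G,S'))$ is among the quantities over which $girth(G)$ takes its supremum; hence $girth(G) \geq \sup_{S \sim_{\mathcal{S}} S'} girth(Cay(G,S')) = \infty$, giving $girth(G) = \infty$.

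The only genuinely delicate step I anticipate is simulating the two left-multiplication Nielsen moves, since the Schreier repertoire supplies right multiplication only; the conjugation-by-inverse identity $s_j^{\pm} s_k = (s_k^{-1} s_j^{\mp})^{-1}$ is precisely what bridges this gap, and verifying it is the main (though still elementary) point. Secondary care is needed to confirm that each intermediate set remains a genuine size-$k$ generating set, and to track the conventions for the $\inf$ over size-$k$ generating sets, which is a nonempty index set exactly because $k \geq d(G)$; the finite-group case is harmless, since Cayley graphs of finite groups have finite girth and so the hypotheses hold only vacuously there.
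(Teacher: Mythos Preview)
Your proof is correct and follows essentially the same approach as the paper: the first implication reduces to the observation that Nielsen-reachable generating sets are Schreier-reachable, and the second to the fact that Schreier-reachable sets are finite generating sets. The paper simply asserts that ``every Nielsen move is a Schreier move'' and argues the second implication by contrapositive, whereas you carefully verify the decomposition of left-multiplication Nielsen moves into Schreier moves and argue the second implication directly; these are expository differences, not substantive ones.
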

\begin{proof}
 As every Nielsen move is a Schreier move, it follows from the definition that 
 \[\mathcal{N}_k\mbox{-}girth(G) =\infty \implies \mathcal{S}_k\mbox{-}girth(G)=\infty. \] Now for the other implication, if $girth(G)<\infty$ then for every sequence of finite set of generators $\{S_n\}$ of $G$ (in particular, where each $S_n$ is of cardinality $k$ and $S_i \sim_{\mathcal{S}} S_j$, i.e. related by a Schreier move in $\{S_n\})$ for all positive integers $i$ and $j$, there exists a natural number $M$ such that $girth(G,S_n)<M$ which implies $S_k\text{-}girth(G)<\infty.$ 
\end{proof}

\begin{corollary}
    Let $G=\langle S\rangle$ be a finitely generated non-cyclic group satisfying a law, then $\mathcal{N}_k$-girth(G)$<\infty$ and $\mathcal{S}_k$-girth(G)$<\infty$ for $k\geq d(G)$, where $d(G)$ is the cardinality of the minimal generating set of $S$ of $G$.
\end{corollary}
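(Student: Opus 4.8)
The plan is to reduce both conclusions to the single estimate $\mathrm{girth}(G)<\infty$. Since Schreier moves preserve the size of a generating set, every quantity $\mathrm{girth}(\mathrm{Cay}(G,S'))$ appearing in the two definitions is bounded above by $\mathrm{girth}(G)$, and the Nielsen-equivalence class of $S$ is contained in its Schreier-equivalence class; hence $\mathcal{N}_k\text{-girth}(G)\le \mathcal{S}_k\text{-girth}(G)\le \mathrm{girth}(G)$ for every $k\ge d(G)$. This is just the contrapositive of the chain of implications in Corollary \ref{cor:2}, so it suffices to prove that a finitely generated non-cyclic group satisfying a law has finite girth.

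To establish this I would exploit the one property separating law-satisfying groups from free-like ones: the free group $F_2=\langle x,y\rangle$ satisfies no nontrivial identity. I would write the given law as a nontrivial reduced word $w(x_1,\dots,x_m)\neq 1$ in $F_m$. Because $F_2$ satisfies no law, I can fix once and for all elements $u_1,\dots,u_m\in F_2$ with $W:=w(u_1,\dots,u_m)\neq 1$ in $F_2$, and set $L:=|W|$, the reduced length of $W$. The key feature is that $L$ depends only on $w$ and on this fixed choice, not on any generating set of $G$.

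Next, for an arbitrary finite generating set $S$ of $G$, I would use non-cyclicity: since $d(G)\ge 2$ we have $|S|\ge 2$, so I may pick two distinct elements $a,b\in S$. Reading $W$ under the substitution $x\mapsto a$, $y\mapsto b$ produces a reduced word of length $L$ in $S^{\pm}$; because $G$ satisfies the law and the entries $u_i(a,b)$ are genuine elements of $G$, this word represents the identity in $G$. As $W\neq 1$ freely, it is a nontrivial relation and hence traces a cycle of length at most $L$ in $\mathrm{Cay}(G,S)$. This would give $\mathrm{girth}(\mathrm{Cay}(G,S))\le L$ for every $S$, whence $\mathrm{girth}(G)\le L<\infty$, and the corollary follows.

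The delicate point, which I expect to be the main thing to get right, is guaranteeing that the substituted word is an honest relation rather than a freely trivial word, and that its length is bounded uniformly in $S$; both are secured precisely by the choice of $u_1,\dots,u_m$ with $W\neq 1$ in $F_2$, and this is exactly where the hypothesis ``$G$ satisfies a law'' does the work. The non-cyclic hypothesis is genuinely needed and cannot be dropped: $\mathbb{Z}=\langle 1\rangle$ satisfies the commutator law, yet on a single generator its Cayley graph is a bi-infinite path with no cycles, so $\mathcal{S}_1\text{-girth}(\mathbb{Z})=\infty$; two generators, which are required in order to substitute into a law such as $[x,y]$, are simply unavailable there.
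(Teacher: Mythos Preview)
Your proposal is correct and follows the same route as the paper: reduce via the contrapositive of Corollary~\ref{cor:2} to the fact that a finitely generated non-cyclic group satisfying a law has finite girth. The paper simply cites this fact from \cite{azer1}, whereas you supply a self-contained proof of it; the reduction step and the overall logic are identical.
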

\begin{proof}
    As non-cyclic groups satisfying a law have finite girth \cite{azer1}, by Corollary \ref{cor:2}, it follows that $$\mathcal{N}_k\textit{-girth}(G), \mathcal{S}_k\textit{-girth}(G)<\infty.$$
\end{proof}

\begin{corollary}
    Let $G$ be a finitely generated group with $N\unlhd G$ such that $G/N$ is not cyclic then if for some positive integer $k\geq d(G)$, where $d(G)$ is the cardinality of the minimal generating set of $G$, $$\mathcal{S}_k\text{-}girth(G/N)=\infty\implies girth(G)=\infty.$$
\end{corollary}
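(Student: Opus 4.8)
The plan is to transport the large-girth generating sets of the quotient $Q:=G/N$ up to $G$ through a lift of the Schreier transformations, and to compare the two Cayley graphs via the quotient homomorphism $\pi\colon G\to Q$. The heart of the argument is the elementary inequality
\[
girth(Cay(G,S))\ \ge\ girth(Cay(Q,\pi(S)))
\]
valid for any finite generating set $S$ of $G$ on which $\pi$ is injective. Indeed, a nontrivial relator among the generators $S$ (a freely reduced word in the generators representing $1_G$) maps under $\pi$ to a word of the same length among the generators $\pi(S)$ representing $1_Q$, and injectivity of $\pi|_S$ keeps this image word freely reduced and nontrivial. Thus every relation in $G$ yields a relation in $Q$ of equal length, so the shortest relation in $Q$ is no longer than the shortest relation in $G$, which is exactly the displayed inequality.

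Before using this I would record that the hypothesis $\mathcal{S}_k\text{-}girth(Q)=\infty$ already forces $Q$ to be infinite: by Corollary \ref{cor:2} it gives $girth(Q)=\infty$, whereas any nontrivial finite group has a finite, regular (vertex-transitive) Cayley graph, which cannot be a tree once $|Q|\ge 3$ and hence has finite girth; since $Q$ is non-cyclic we have $|Q|\ge 4$. Infiniteness of $Q$ guarantees that there are $k$-element generating sets of $Q$ with pairwise distinct entries, and, because $k\ge d(G)\ge d(Q)$, that such a set can be realized as $\pi(S)$ for some $k$-element generating set $S$ of $G$ (start from any $k$-element generating set of $G$, project, and remove coincidences among the images by Schreier moves, lifting each move back to $G$).

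With $S$ fixed so that $T:=\pi(S)$ is a genuine $k$-element generating set of $Q$, I would invoke the hypothesis: since the infimum defining $\mathcal{S}_k\text{-}girth(Q)$ equals $\infty$, in particular $\sup_{T'\sim_{\mathcal{S}}T} girth(Cay(Q,T'))=\infty$. Hence for each $M$ there is a generating set $T'$ of $Q$, obtained from $T$ by finitely many Schreier transformations, with $girth(Cay(Q,T'))>M$. Performing the very same sequence of Schreier transformations on $S$ (each move is defined purely by the group operations, and $\pi$ is a homomorphism, so the moves commute with $\pi$) yields a generating set $S'$ of $G$ with $\pi(S')=T'$; that $S'$ still generates $G$ follows because Schreier moves are invertible. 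A generating set with girth exceeding $2$ cannot have a repeated entry, since a repeat produces a $2$-cycle, so for $M\ge 2$ the entries of $T'$ are distinct and $\pi|_{S'}$ is injective. The displayed inequality then gives
\[
girth(Cay(G,S'))\ \ge\ girth(Cay(Q,T'))\ >\ M .
\]
As $M$ is arbitrary, $girth(G)=\sup_{S''}girth(Cay(G,S''))=\infty$, as claimed.

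The step I expect to demand the most care is the bookkeeping around injectivity of $\pi$ on the generators: the clean comparison of Cayley-graph girths holds only when the generators map to distinct elements, so one must confirm both that a suitable starting set $S$ exists (this is where infiniteness of $Q$, forced by the hypothesis, is used) and that the large-girth sets $T'$ supplied by the hypothesis automatically have distinct entries (the latter is what makes the needed injectivity free of charge). Everything else — lifting the Schreier moves and the relator-counting inequality — is formal once $\pi$ is recognized as a homomorphism intertwining the Schreier transformations on $G$ and on $Q$.
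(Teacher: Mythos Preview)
Your proof is correct, but it follows a different route from the paper's. The paper argues by contrapositive in two lines: it cites Akhmedov's result from \cite{azer1} that $girth(G)<\infty\Rightarrow girth(G/N)<\infty$ (this is where the non-cyclic hypothesis is used), and then invokes Corollary~\ref{cor:2} to pass from finite girth of $G/N$ to finite $\mathcal{S}_k$-girth of $G/N$. So the paper only uses the Schreier hypothesis through its weakest consequence $girth(G/N)=\infty$ and outsources the lifting step entirely.

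Your argument, by contrast, is self-contained and exploits the Schreier structure directly: rather than lifting large-girth generating sets of $Q$ element by element (where one would have to work to ensure the lift generates $G$ --- this is the substance of Akhmedov's lemma), you lift the \emph{sequence of Schreier moves} producing $T'$ from $T$. Since Schreier moves are invertible and $\pi$ is a homomorphism, the lifted $S'$ automatically generates $G$ and satisfies $\pi(S')=T'$, after which the covering inequality $girth(Cay(G,S'))\ge girth(Cay(Q,T'))$ finishes the job. This is a neat way to bypass the usual obstacle in passing girth from a quotient to the group; it genuinely uses that the large-girth sets in $Q$ are Schreier-equivalent to a fixed $T$, which is exactly what $\mathcal{S}_k\text{-}girth(Q)=\infty$ provides beyond $girth(Q)=\infty$. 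The paper's route is shorter because it can cite \cite{azer1}; yours has the advantage of not needing that black box.
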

\begin{proof}
   It follows from \cite{azer1} that $girth(G)<\infty \implies girth(G/N)<\infty$ and then by Corollary \ref{cor:2}, we have $S_k\text{-}girth(G)<\infty$ for all $k\geq d$.
\end{proof}

\begin{remark}
    However, the converse of Corollary \ref{cor:2} is not true in general, Proposition \ref{prop:finiteNgirth} gives a class of counterexamples.
\end{remark}

\subsection{Proof of Proposition \ref{prop:finiteNgirth}}
\begin{proof}
    We will show that $\mathcal{N}_2\text{-girth}(\Gamma_n)\leq 4n$. Let $S=\{a,b\}$ and $S_r=\{a_r,b_r\}$ be any generating set such that $S_r$ is obtained from $S$ by applying finitely many Nielsen moves i.e., $S=S_1 \sim^1_N S_2 \sim^1_N S_3\sim^1_N \dots \sim^1_N S_r$, where $S_i\sim_N^1 S_{i+1}$ means $S_{i+1}$ is obtained from $S_i$, $1\leq i\leq r-1$ by applying exactly one Nielsen move. Under a Nielsen move $S_k\sim_N S_{k+1}$, we have these eight possibilities: 
    \medskip    
     \begin{itemize}
         \item  $S_k=\{a_k, b_k\}\sim^1_N \{a_kb_k^{\pm},b_k\}=S_{k+1}$,
         \item $S_k=\{a_k,b_k\}\sim^1_N \{b_k^{\pm}a_k,b_k\}=S_{k+1}$,
         \item $S_k=\{a_k,b_k\}\sim^1_N \{a_k,a_k^{\pm}b_k\}=S_{k+1}$, and
         \item $S_k=\{a_k,b_k\}\sim^1_N \{a_k,b_ka_k^{\pm}\}=S_{k+1},$
     \end{itemize}
     \medskip
\noindent     for $1\leq k\leq r$, then the commutator in the $k+1$ step, $S_{k+1}$ is either conjugate or remains the same to the commutator in step $k$, $S_k$ as shown below:
\medskip
     \begin{enumerate}
         \item $[a_kb_k,b_k]=a_kb_kb_k(a_kb_k)^{-1}(b_k)^{-1}=a_kb_kb_kb_k^{-1}a_k^{-1}b_k^{-1}=[a_k,b_k]$,
         \item $[b_ka_k,b_k]=b_ka_kb_k(b_ka_k)^{-1}b_k^{-1}=b_ka_kb_ka_k^{-1}b_k^{-1}b_k^{-1}=b_k[a_k,b_k]b_k^{-1}$,
         \item $[a_kb_k^{-1}, b_k]=[a_k,b_k]$,
         \item $[b_k^{-1}a_k,b_k]=b_k^{-1}[a_k,b_k]b_k$,
         \item $[a_k,a_kb_k]=a_k[a_k,b_k]a_k^{-1}$,
         \item $[a_k,a_k^{-1}b_k]=a_k^{-1}[a_k,b_k]a_k$,
         \item $[a_k,b_ka_k]=[a_k,b_k]$, and
         \item $[a_k,b_ka_k^{-1}]=[a_k,b_k]$.
     \end{enumerate}
     \medskip
     
    However, we know that the order of a group element remains invariant under conjugation, that is ord$([b_k^{\pm}a_k,b_k])$=ord$([a_kb_k^{\pm},b_k])$ =ord$([a_k,a_k^{\pm}b_k])$ =ord$([a_k,b_ka_k^{\pm}])$=ord$([a_k,b_k])$. Therefore, $\mathcal{N}_2\text{-girth}(\Gamma_n)$ is at most $4n$ (i.e. ord$([a_k,b_k]^n)=$ ord$((g[a_k,b_k]g^{-1})^n)$=$4n$ at each step $k$, where $1\leq k\leq r$ and $g\in \{a^{\pm}, b^{\pm}\}$) and hence,  $\mathcal{N}_2\text{-girth}(\Gamma_n)<\infty$. 
    \medskip    
    But, as Tits Alternative holds for one-relator group asserts (\cite{KS}) and from the classification of virtually solvable subgroups of one relator group, it follows that any 1-relator group is of the following type:  1) $\mathbb{Z}/n\mathbb{Z}$- a finite cyclic group of order $n$, 2) $\mathbb{Z}$-an infinite cyclic group of integers, 3) $\mathbb{Z}^2$- the group of the integral grid in $\mathbb{R}^2$, 4) Klein group $\mathbb{K}=\langle a,b| b^{-1}ab=a^{-1}\rangle$, and 5) the solvable Baumslag-Solitar groups $BS(1,n)=\langle a,b| b^{-1}ab=a^n\rangle$ for any integer $|n|\geq 1$ (or $BS(m,1)$ respectively for $|m|\geq 1$). Therefore, $\Gamma_n$ for any integer $n\geq 2$ is not virtually solvable as it does not fall under any of the above categories of virtually solvable subgroups of 1-relator groups. Hence, $\Gamma_n$ has infinite girth by Girth Alternative for 1-relator groups (\cite{azer2},\cite{KS}).
\end{proof}
   
\begin{definition}
    We define the Nielsen girth ($\mathcal{N}\text{-girth}$) and Schreier girth ($\mathcal{S}\text{-girth}$) of $G$ as $$\mathcal{N}\text{-girth}(G)=\inf_{\langle S \rangle=G} \{\sup_{\langle S'\rangle=G, S'\sim_{N} S}\hspace{0.1in}\text{girth Cay}(G,S')\} \hspace{0.1cm}\text{and}$$ $$\mathcal{S}\text{-girth}(G)=\inf_{\langle S \rangle=G} \{\sup_{\langle S'\rangle=G, S'\sim_{\mathcal{S}} S}\hspace{0.1in}\text{girth Cay}(G,S')\},$$ where $S$ is the minimal generating set of $G$.
\end{definition}

\begin{Conjecture}\label{conj:3}
    Any finitely generated linear group is either virtually solvable or has an infinite $\mathcal{S}$-girth.
\end{Conjecture}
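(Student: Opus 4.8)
The statement is a dichotomy, so it suffices to prove that a finitely generated linear group $G$ which is not virtually solvable satisfies $\mathcal{S}\text{-girth}(G)=\infty$. Unwinding the definition, this means that for \emph{every} minimal generating set $S$ one must produce Schreier-equivalent generating sets $S'$ with $girth(Cay(G,S'))$ arbitrarily large. The natural strategy is to upgrade Akhmedov's Girth Alternative for linear groups (\cite{azer2}), which already yields $girth(G)=\infty$, to the Schreier setting. The crucial difference is that ordinary girth is a supremum over \emph{all} finite generating sets, whereas $\mathcal{S}\text{-girth}$ restricts to generating sets of the fixed minimal cardinality $k=d(G)$ that moreover lie in the Schreier orbit of the chosen $S$; this is a single Nielsen class, in general a proper subset of all $k$-element generating sets.

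First I would recast the problem in terms of epimorphisms. A minimal generating $k$-tuple is an epimorphism $\phi_0:\mathbb{F}_k\twoheadrightarrow G$, the Schreier moves generate $\mathrm{Aut}(\mathbb{F}_k)$, and $S'\sim_{\mathcal{S}}S$ exactly when the associated epimorphisms lie in one orbit of the $\mathrm{Aut}(\mathbb{F}_k)$-action $\phi\mapsto\phi\circ\alpha^{-1}$ on $\mathrm{Epi}(\mathbb{F}_k,G)$. Writing $N=\ker\phi_0$ and letting $B_n\subset\mathbb{F}_k$ be the ball of radius $n$, the inequality $girth(Cay(G,S'))\geq n$ translates into $\alpha(B_n)\cap N=\{1\}$ for the corresponding $\alpha$. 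Hence the goal becomes purely combinatorial: for each $n$ exhibit $\alpha_n\in\mathrm{Aut}(\mathbb{F}_k)$ pushing every nontrivial element of $N$ out of $B_n$, and this uniformly over every starting $\phi_0$.

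To construct such $\alpha_n$ I would transplant the ping-pong mechanism underlying the linear Girth Alternative. By the Tits alternative $G$ contains a nonabelian free subgroup and, via a suitable representation, a Zariski-dense subgroup acting on $\mathbb{R}P^1$ (or a projective space, respectively a building boundary) with north--south dynamics. Starting from $S$, I would use the multiplication moves $s_i\mapsto s_i u_i$, with each $u_i$ a word in the remaining generators chosen to carry strongly expanding dynamics, so that the modified generators $s_i'$ acquire pairwise disjoint attracting and repelling neighbourhoods. The ping-pong lemma then forces any reduced word of length $\leq n$ in the $s_i'$ that represents the identity to be trivial, giving $girth(Cay(G,S_n))\geq n$; letting the expansion grow with $n$ produces the required sequence, and because every modification is a Schreier move, $S_n\sim_{\mathcal{S}}S$ automatically.

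The hard part is the outer infimum over minimal generating sets: the construction must succeed starting from an \emph{arbitrary} minimal tuple, yet a multiplication move only allows multiplying $s_i$ by words in the \emph{other} generators, so the expanding element forcing ping-pong must be expressible within the appropriate subgroups, uniformly over all starting tuples. The two requirements — that the substituted tuple still generate $G$ and that its ping-pong domains stay disjoint — pull against each other, and reconciling them for every $\phi_0$ is the crux. I expect the decisive ingredient to be a uniform ping-pong lemma, guaranteed by Zariski density, asserting that from any finite generating set one can reach, by Nielsen or Schreier moves alone, tuples of transverse hyperbolic elements with arbitrarily separated dynamics; once such a uniform statement is available, the passage from $girth=\infty$ to $\mathcal{S}\text{-girth}=\infty$ should follow along the lines above. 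A secondary difficulty is that the high-girth generating sets furnished directly by \cite{azer2} need not lie in the Nielsen class of $\phi_0$, which is precisely why the construction has to be carried out intrinsically by Schreier moves rather than imported wholesale.
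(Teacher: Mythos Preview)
The statement you are attempting to prove is labeled a \emph{Conjecture} in the paper, and the paper explicitly says it ``is still an open problem.'' There is no proof in the paper to compare against; the authors merely pose it as motivation.

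Your proposal is a reasonable heuristic outline, but it is not a proof, and you yourself identify the gap accurately. The entire content of the conjecture lies in the ``hard part'' you flag: that the ping-pong construction can be carried out starting from an \emph{arbitrary} minimal generating tuple and staying within its Schreier (equivalently $\mathrm{Aut}(\mathbb{F}_k)$) orbit. Your proposed fix is to invoke a ``uniform ping-pong lemma \dots\ guaranteed by Zariski density,'' but no such lemma is stated or proved, and this is not a routine strengthening of the Tits or Girth Alternative arguments. Akhmedov's proof in \cite{azer2} produces high-girth generating sets by first passing to a free subgroup and then enlarging to a generating set of $G$; neither step respects the cardinality $d(G)$ nor the Nielsen class of a prescribed starting tuple, so the argument does not transplant. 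Moreover, there are finitely generated groups (already among metabelian and relatively simple linear examples) with several Nielsen classes of minimal generating tuples, so the restriction to a single Schreier orbit is a genuine constraint, not a formality. Until you can actually prove that from every minimal tuple one can reach, by Schreier moves alone, generators with transverse hyperbolic dynamics and arbitrarily separated attracting/repelling sets, the argument remains a restatement of the conjecture rather than a proof of it.
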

Conjecture \ref{conj:3} is precisely the \textbf{Schreier Girth Alternative} for linear groups, which is still an open problem and one of our major motivations.

\section{Action of the affine group on the real line}

In this section, we study Question \ref{thm:holder} and Question \ref{thm:Q2} for the group of orientation-preserving affine homeomorphisms of the real line $$\textit{Aff}_+(\mathbb{R})=\bigg \{f \in \Homeo_+(\mathbb{R})\hspace{0.1cm}\bigg |\hspace{0.1cm} f(x)=ax+b, a>0, b\in \mathbb{R}\bigg\}\simeq \Bigg \{\begin{bmatrix}
    a & b\\
    0 & 1
\end{bmatrix}, a>0, b\in \mathbb{R} \Bigg \}.$$ As per the above matrix representation of the group $\Aff_+(\mathbb{R})$, the action of $\Aff_+(R)$ on the real line is defined by M\"{o}bious transformation, that is for any $f=\begin{bmatrix}
    a & b\\
    0 & 1
\end{bmatrix}\in \Aff_+(\mathbb{R}),$ and $x\in\mathbb{R}$
$$f\cdot x=\begin{bmatrix}
    a & b \\
    0 & 1
\end{bmatrix}\cdot x=\frac{ax+b}{0x+1}=ax+b.$$
To answer Question \ref{thm:holder}, note that a matrix in $\Aff_+(\mathbb{R})$ acts freely on $\mathbb{R}$ if and only if $b\neq 0$ and $a=1$, which follows from the fixed-point condition. Therefore, any two matrices of the form $A=\begin{bmatrix}
    1 & b\\
    0 & 1
\end{bmatrix}, B=\begin{bmatrix}
    1 & c\\
    0 & 1
\end{bmatrix}$ acts freely on $\mathbb{R}$ and commute. This also agrees with the well-known H\"{o}lder theorem (Proposition 2.2.29, \cite{AN}). And we answer positively Question \ref{thm:holder}.

\subsection{Proof of Proposition \ref{prop: AffR}}
\begin{proof}
We will now answer Question \ref{thm:Q2} negatively for the action of $\Aff_+(\mathbb{R})$ on $\mathbb{R}$. To find the fixed point condition, let $f=\begin{bmatrix}
    a & b\\
    0 & 1
\end{bmatrix}$ and $g=\begin{bmatrix}
    c & d\\
    0 & 1
\end{bmatrix}$ be two non-trivial elements in $\Aff_+(\mathbb{R})$, we have for any non-zero integer $n$ \begin{equation}
    f^ng=\begin{bmatrix}
        a^n & b(\frac{1-a^n}{1-a})\\
        0 & 1
    \end{bmatrix}\begin{bmatrix}
        c & d\\
        0 & 1
    \end{bmatrix}=\begin{bmatrix}
        a^nc & a^nd+b(\frac{1-a^n}{1-a})\\
        0 & 1
    \end{bmatrix},
\end{equation}
\begin{equation}
    f^ng^{-1}=\begin{bmatrix}
        a^n & b(\frac{1-a^n}{1-a})\\
        0 & 1
    \end{bmatrix}\begin{bmatrix}
        \frac{1}{c} & -\frac{d}{c}\\
        0 & 1
    \end{bmatrix}=\begin{bmatrix}
        \frac{a^n}{c} & -\frac{a^nd}{c}+b(\frac{1-a^n}{1-a})\\
        0 & 1
    \end{bmatrix},
\end{equation}

\begin{equation}
    g^nf=\begin{bmatrix}
        c^n & d(\frac{1-c^n}{1-c})\\
        0 & 1
    \end{bmatrix}\begin{bmatrix}
        a & b\\
        0 & 1
    \end{bmatrix}=\begin{bmatrix}
        c^na & c^nb+d(\frac{1-c^n}{1-c})\\
        0 & 1
    \end{bmatrix},
\end{equation}

\begin{equation}
    g^nf^{-1}=\begin{bmatrix}
        c^n & d(\frac{1-c^n}{1-c})\\
        0 & 1
    \end{bmatrix}\begin{bmatrix}
        \frac{1}{a} & -\frac{b}{a}\\
        0 & 1
    \end{bmatrix}=\begin{bmatrix}
        \frac{c^n}{a} & -\frac{c^nb}{a}+d(\frac{1-c^n}{1-c})\\
        0 & 1
    \end{bmatrix}.
\end{equation} From all the above equations, the condition for the first step primitive elements $f^ng^{\pm},g^nf^{\pm}$ to have fixed points is given by
\begin{equation}\label{equation:fixedpointcondition}
    c^na, \frac{c^n}{a}, \frac{a^n}{c}, a^nc \neq 1,
\end{equation}
Now, let $f$ and $g$ be two elements of $\Aff_+(\mathbb{R})$ satisfying the fixed-point condition obtained in Equation \ref{equation:fixedpointcondition}. The commutator $[f,g]$ does not satisfy the fixed-point condition Equation $\ref{equation:fixedpointcondition}$, provided $-d-cb+b+ad\neq 0$
\begin{equation} \label{eq: fix}
[f,g]=fgf^{-1}g^{-1}=\begin{bmatrix}
    1 & -d-cb+b+ad\\
    0 & 1
    \end{bmatrix}.
\end{equation} Hence, the group $\langle f, g \rangle$ generated by elements $f$ and $g$ (satisfying Equation \ref{equation:fixedpointcondition} and $-d-cb+b+ad\neq 0$) does not have a fixed-point, answering negatively Question \ref{thm:Q2}. Here is a concrete choice of $a,b,c$ and $d$ satisfying Equations \ref{equation:fixedpointcondition} and \ref{eq: fix} i.e.,

$$f=\begin{bmatrix}
    2 & 3\\
    0 & 1
\end{bmatrix}, g=\begin{bmatrix}
    3 & 4\\
    0 & 1
\end{bmatrix}, [f,g]=\begin{bmatrix}
    1 & -2\\
    0 & 1
\end{bmatrix}.$$
Hence we completed the prove of the Proposition \ref{prop: AffR}.
\end{proof}

\section{Modular group action on the hyperbolic plane}

In this section, we consider the M\"{o}bius action of the group $SL(2,\mathbb{R})$ on the hyperbolic plane $\mathbb{H}^2$. Here we consider the upper half space model of the space $\mathbb{H}^2=\{z\in \mathbb{C}| Im(z)>0\}$ and the boundary of $\mathbb{H}^2$, $\partial \mathbb{H}^2=\mathbb{R}$. The action of $SL(2,\mathbb{R})$ on $\mathbb{H}^2$ is by M\"{o}bius transformations defined as $$\begin{bmatrix}
    a & b\\
    c & d
\end{bmatrix}\cdot z=\frac{az+b}{cz+d}\hspace{0.1in}\text{where}\hspace{0.1in}Im(z)>0, z\in \mathbb{C}.$$

\noindent The discriminant for a matrix $A=\begin{bmatrix}
    a & b\\
    c & d
\end{bmatrix}\in SL(2,\mathbb{R})$ under the M\"{o}bius action is given by $$D=(a+d)^2-4= |Trace(A)|^2-4,$$ as $ad-bc=1$ for $A$ being in $SL(2,\mathbb{R})$. 
\medskip

Based on the fixed points, the $SL(2,\mathbb{R})$ action on $\mathbb{H}^2$ are classified as follows:
\medskip
\begin{enumerate}
    \item \textit{Elliptic}, if $D<0;$ then $|Trace(A)|<2$. Any such matrix has a unique fixed point in $\mathbb{H}^2$.
    \item \textit{Parabolic}, if $D=0;$ then $|Trace(A)|=2$. Any such matrix has a fixed point in the boundary $\partial \mathbb{H}^2=\mathbb{R}$.
    \item \textit{Hyperbolic}, if $D>0;$ then $|Trace(A)|>2$. Any such matrix has two distinct fixed points in the boundary $\partial \mathbb{H}^2=\mathbb{R}$.
\end{enumerate}
\medskip
Now, we are ready to prove Proposition \ref{prop:abelian}.

\subsection{Proof of Proposition \ref{prop:abelian}}
\begin{proof}
We assume $G$ is generated by two non-identity matrices $A=\begin{bmatrix}
    a & b\\
    c & d
\end{bmatrix}$ and $B=\begin{bmatrix}
    p & q\\
    r & s
\end{bmatrix}$ in $SL(2,\mathbb{R})$. Our proof will generalize to finitely generated groups whose generating set contains more than two elements. Let $z_0, z_1$ be two distinct points in $\mathbb{H}^2$ such that
\begin{equation}\label{equation:HypFix}
    A\cdot z_0=z_0, B\cdot z_1=z_1.
\end{equation}
As we know, given any two distinct points in $\mathbb{H}^2$, there exists a unique geodesic (vertical line or a half circle) in $\mathbb{H}^2$ passing through them. Then, we can find matrices in $SL(2,\mathbb{R})$ that map $z_0, z_1$ to $i, ri$ respectively in the imaginary axis for some $r>0$. So, without loss of generality, we can assume $z_0=i$ and $z_1=ri$ for some $r>0$ in Equation (\ref{equation:HypFix}), which implies 
$$A\cdot i=\begin{bmatrix}
    a & b\\
    c & d
\end{bmatrix}\cdot i=i\implies a=-c,b=d.$$

Similarly, 
$$B\cdot i=\begin{bmatrix}
    a' & b' \\
    c' & d'
\end{bmatrix}\cdot (ri)=ri\implies 
c'=-\lambda b',$$ where $\lambda=-\frac{1}{r^2}, r\neq 0$ and as $A,B \in SL(2,\mathbb{R})$, the matrices $A$ and $B$ are precisely of this form $$A=\begin{bmatrix}
    \cos{\theta} & \sin{\theta}\\
    -\sin{\theta} & \cos{\theta}
\end{bmatrix}, B=\begin{bmatrix}
    \cos{\eta} & \sin{\eta}\\
    -\lambda \sin{\eta} & \cos{\eta}
\end{bmatrix}, \lambda=r^2>0$$
where $\theta$ and $\eta$ can not be a multiple of $2n\pi$ for $n\in \mathbb{Z}$.  Now, without loss of generality we replace $B$ by $B'$ a conjugate of $B$ by a diagonal matrix with $\lambda, \lambda^{-1}$ on the diagonal, as shown below, $$B'=\begin{bmatrix}
    \lambda & 0\\
    0 & \lambda^{-1}
\end{bmatrix}\begin{bmatrix}
    \cos{\eta} & \sin{\eta}\\
    -\sin{\eta} & \cos{\eta}
\end{bmatrix}\begin{bmatrix}
    \lambda^{-1} & 0\\
    0 & \lambda
\end{bmatrix}.$$ Using trigonometric identities, we have$$A^n=\begin{bmatrix}
    \cos{n\theta} & \sin{n\theta}\\
    -\sin{n\theta} & \cos{n\theta}
\end{bmatrix}, {B'}^k=\begin{bmatrix}
    \lambda & 0\\
    0 & \lambda^{-1}
\end{bmatrix}\begin{bmatrix}
    \cos{k\eta} & \sin{k\eta}\\
    -\sin{k\eta} & \cos{k\eta}
\end{bmatrix}\begin{bmatrix}
\lambda^{-1} & 0\\
0 & \lambda
\end{bmatrix}.$$

The trace of $A^nB'^k$ is given by the following expression:
\begin{multline}\label{equa:expression}
   Trace(A^n{B'}^k)=2\cos{n\theta} \cos{k\eta}-(\lambda^2+\lambda^{-2})\sin{n\theta}\sin{k\eta}\\=2\cos(n\theta -k\eta)-(2+\lambda^2+\lambda^{-2})\sin{n\theta}\sin{k\eta}
    =2\cos(n\theta-k\eta)-(\lambda+\lambda^{-1})^2\sin{n\theta}\sin{k\eta}.
\end{multline}
\noindent Assume without loss of generality, $0<\theta, \eta <\pi$, then we have the following two cases:
\medskip

\textbf{Case 1:} If $\theta, \eta$ are rational, we can choose $k=1$ and some $n$ arbitrary such that the expression obtained in Equation \ref{equa:expression}, $2\cos(n\theta-\eta)-(\lambda+\lambda^{-1})^2\sin{n\theta}\sin{\eta}$ is less than $-2$ and hence we are done.
\medskip

\textbf{Case 2:} If one of $\theta$ or $\eta$ is irrational, without loss of generality, assume $\theta$ is irrational. Choose $k=1$, so we have $\sin{k\eta}=\sin{\eta}\neq 0$. Then, for $\epsilon >0$, we can choose integer $n$ such that $\pi+2\pi r\leq (n\theta -\eta)\leq \pi+2\pi r+\epsilon$ for some integer $r$ and $\sin{n\theta}\sin{\eta}>0$. Then, for sufficiently small $\epsilon$, we have $$2\cos(n\theta-\eta)-(\lambda+\lambda^{-1})^2\sin{n \theta}\sin{k \eta}<-2.$$ 

\noindent Hence, $A^nB'$ does not have a fixed point unless $G$ there is a fixed point in $G$.
\end{proof}
In the proof of Proposition \ref{prop:abelian}, we also showed the following result,
\begin{corollary}
    In a non-abelian subgroup $G\leq SL(2,\mathbb{R})$, a freely acting element in the group $G$ always exists.
\end{corollary}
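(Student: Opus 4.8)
The plan is to prove the contrapositive: if no element of $G$ acts freely on $\mathbb{H}^2$, then $G$ is abelian. By the trichotomy recorded above, an element of $SL(2,\mathbb{R})$ has a fixed point in $\mathbb{H}^2$ exactly when it is elliptic, since parabolic and hyperbolic elements fix only points of $\partial\mathbb{H}^2=\mathbb{R}$. Hence the hypothesis that $G$ contains no freely acting element is equivalent to the statement that every non-identity element of $G$ is elliptic, i.e.\ $|\mathrm{Trace}(g)|<2$ for all $g\in G\setminus\{I\}$.

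The heart of the argument is to show that all these elliptic elements share a single common fixed point of $\mathbb{H}^2$. First I would dispose of the easy direction: if two non-identity elements fix the same point $p$, then both lie in the stabilizer of $p$, a conjugate of $SO(2)=\left\{\begin{bmatrix}\cos\theta & \sin\theta\\ -\sin\theta & \cos\theta\end{bmatrix}:\theta\in\mathbb{R}\right\}$, and so they commute. Thus it suffices to rule out the existence of $A,B\in G$ with distinct fixed points $z_0\neq z_1$. Conjugating the whole group $G$ by a fixed element of $SL(2,\mathbb{R})$ — an operation preserving both abelianness and the property of containing a non-elliptic element, since trace is a conjugacy invariant — I may assume $z_0=i$, so that $A$ has the rotation normal form while $B=\begin{bmatrix}p&q\\ s&t\end{bmatrix}$ is an elliptic matrix not fixing $i$. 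This is precisely the configuration analysed in the proof of Proposition \ref{prop:abelian}: one computes $\mathrm{Trace}(A^nB)=(p+t)\cos n\theta+(s-q)\sin n\theta=R\cos(n\theta-\varphi)$, whose amplitude satisfies
\[
R^2=(p+t)^2+(s-q)^2=4+(p-t)^2+(s+q)^2>4,
\]
the strict inequality holding exactly because $B$ does not fix $i$ (equality would force $p=t$, $s=-q$). The trace analysis of Equation \ref{equa:expression} then yields an exponent $n$ with $|\mathrm{Trace}(A^nB)|>2$, so $A^nB\in G$ is hyperbolic, contradicting the all-elliptic hypothesis. Hence all non-identity elements of $G$ share one fixed point $p$.

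With a common fixed point in hand, I conjugate $G$ so that $p=i$; then $G$ is contained in the stabilizer $SO(2)$ of $i$, which is abelian, and since abelianness passes between $G$ and its conjugate, $G$ itself is abelian. This completes the contrapositive, so any non-abelian subgroup must contain a non-elliptic (parabolic or hyperbolic) element, which is exactly a freely acting element. The argument never invokes finite generation, manipulating only two elements at a time, so the conclusion holds for arbitrary non-abelian subgroups of $SL(2,\mathbb{R})$.

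The step I expect to be the main obstacle is the common-fixed-point claim, and within it the case where $A$ is an elliptic element of \emph{finite} order (a rational rotation): then the powers $A^n$ no longer equidistribute in $SO(2)$, so one cannot simply increase $n$ until $R\cos(n\theta-\varphi)$ exceeds $2$. In that situation I would produce the required non-elliptic element from the commutator $[A,B]=(ABA^{-1})B^{-1}$, which is the composition of the rotation $ABA^{-1}$ (by the same angle as $B$, about $A\cdot z_1$) with $B^{-1}$ (by the opposite angle, about $z_1$); since $A\cdot z_1\neq z_1$ the two centres are distinct and the rotational parts cancel, so $[A,B]$ has vanishing total rotation and is therefore parabolic or hyperbolic rather than a nontrivial elliptic. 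This is the one place where input beyond the computation of Proposition \ref{prop:abelian} is genuinely needed; the reduction to a common fixed point and the passage to $SO(2)$ are routine bookkeeping.
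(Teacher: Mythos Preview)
Your argument is essentially the paper's: the corollary is stated there as an immediate consequence of the proof of Proposition~\ref{prop:abelian}, via the contrapositive that two elliptics with distinct fixed points already produce a non-elliptic element of the form $A^nB$. You actually go further than the paper by isolating the finite-order case; the commutator heuristic you give there is correct and can be made rigorous by the direct computation $\mathrm{Tr}\!\big((ABA^{-1})B^{-1}\big)=2+2(\cosh d-1)\sin^2\eta>2$, where $2\eta$ is the rotation angle of $B$ and $d>0$ is the hyperbolic distance from $z_1$ to $A\cdot z_1$, so $[A,B]$ is in fact strictly hyperbolic.
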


\section{General linear group action on the real projective line}\label{sec:GLactsRP1}

$GL(2,\mathbb{R})$ naturally acts on $\mathbb{R}^2-\{(0,0)\}$ by usual matrix multiplication, and this action lifts to an action of $GL(2,\mathbb{R})$ on the real projective line $\mathbb{R}P^1$. For a matrix $A\in GL(2,\mathbb{R})$ to have a fixed point in $\mathbb{R}P^1$ we must have $$\begin{bmatrix}
    a & b\\
    c & d
\end{bmatrix}\cdot \begin{bmatrix}
    x\\
    y
\end{bmatrix}=\lambda\begin{bmatrix}
    x\\
    y
\end{bmatrix}
\implies \begin{bmatrix}
    a- \lambda & b\\
    c & d-\lambda
\end{bmatrix}\cdot \begin{bmatrix}
    x\\
    y
\end{bmatrix}=\begin{bmatrix}
    0\\
    0
\end{bmatrix}.$$

\noindent For the above equation to have real roots, we must have
\begin{equation}\label{equation:condtn}
    (a+d)^2-4(ad-bc)\geq 0 \implies |Trace(A)|\geq 2|Det(A)|.
\end{equation}
Hence, the fixed point condition for the action of $GL(2,\mathbb{R})$ on $\mathbb{R}P^1$ is $|Trace(A)|\geq 2|Det(A)|$.

\subsection{Proof of Proposition \ref{prop:GL2R}}
\begin{proof}
Consider the subgroup $\Gamma=\langle A,B \rangle \leq GL(2,\mathbb{R})$ which do satisfy the conditions in Equation \ref{equation:condtn} $$A=\begin{bmatrix}
    0.13 & 0.95\\
    0 & 1.9
\end{bmatrix}, B=\begin{bmatrix}
    0.15 & 0\\
    -0.06 & 1.9
\end{bmatrix}.$$ 
Note that $Det(A)=0.247, Det(B)=0.285, Trace(A)=2.03,$ and $Trace(B)=2.05$. We will show that Equation \ref{equation:condtn} holds for the first step primitive elements, $A^mB$ and $B^mA$ (similar reasoning can be given for $A^mB^{-1}$ and $B^mA^{-1}$).

\begin{multline}\label{equation:AmB}
    Trace(A^mB)=0.15 (0.13^m) - 0.06\{(-0.536723)0.13^m + (0.536723)1.9^m\} + 
 1.9^{m+1}\\ 
 =(0.18220338)(0.13)^m+(1.9)^m(1.86779662).
\end{multline}
 \begin{multline}\label{equa:AmBdet}
     2Det(A^mB)=\\2[(-0.061186)(0.247)^m + (0.061186)1.9^{2m} + 
   (0.18220)(0.13)^m(1.9)^{1 + m} - (0.032203)(1.9)^{1+ 2m}]\\
   =(0.569998)(0.247)^m.
 \end{multline}
 \begin{multline}\label{equation:BmA}
     Trace(B^mA)= 0.13 (0.15^m) + 
 0.95\{(0.0342857) (0.15)^m - (0.0342857)(1.9)^m\} + 1.9^{m+1}\\
 = (0.162571415)(0.15)^m +1.867428585(1.9)^m.
 \end{multline}
\noindent\begin{equation}\label{equa:BmA}
2Det(B^mA)=2(0.13) (0.15)^m (1.9)^{1+m}=0.494(0.15)^m(1.9)^m=0.494(0.285)^m.
\end{equation}

Now, here we give a ping-pong argument, in the $Trace(A^mB)$ Equation \ref{equation:AmB}, for positive integer values of $m$, the second term $(1.9)^m$ dominates and $|Trace(A^mB)|\geq 2|Det(A^mB)|$ holds. For negative integer values of $m$ in Equation \ref{equation:AmB}, the first term $(0.13)^m$ dominates, and still $|Trace(A^mB)|\geq 2|Det(A^mB)|$ holds. By similar arguments, $|Trace(B^mA)|\geq 2|Det(B^mA)|$. Similar arguments can be made for the other first-level primitive elements $A^mB^{-1}$ and $B^mA^{-1}$. Moreover, it turns out that even elements, namely $A^nB^m$ and $B^mA^n$ for all $m,n\in \mathbb{Z}$ satisfy the condition of Equation $\ref{equation:condtn}$. However, the commutator $[B,A]$ has a trace of less than two i.e., $$[B,A]=B^{-1}A^{-1}BA=\begin{bmatrix}
    1.2 & -83.7949\\
    0.0357341 & -1.66194
\end{bmatrix}.$$ Hence, as $|Trace([B,A])|=0.461943<2.00001=2|Det[B,A]|$, the commutator $[B,A]$ fail to satisfy the condition of Equation \ref{equation:condtn} and therefore $\langle A,B \rangle$ does not fixes a point in $\mathbb{R}P^1$ answering negatively Question \ref{thm:firstlevelgenerators}.
    
\end{proof}

\begin{remark}
    Similar constructions as in Proposition \ref{prop:GL2R} can also be arranged in the higher dimensions for $GL(n+1, \mathbb{R})$ action on $\mathbb{R}P^{n}$ for all positive integers $n\geq 1$.
\end{remark}

\section{Action of the group of the piecewise linear homeomorphims of interval}


Consider the group generated by two elements $f$ and $g$ in $\mbox{PL}_+(I)$ shown in Figure \ref{fig:11}, we will prove that $\Gamma=\langle f,g \rangle$ answer negatively to Question \ref{thm:firstlevelgenerators}, proving Proposition \ref{prop:PLI}.
\subsection{Proof of Proposition \ref{prop:PLI}}
\begin{proof}
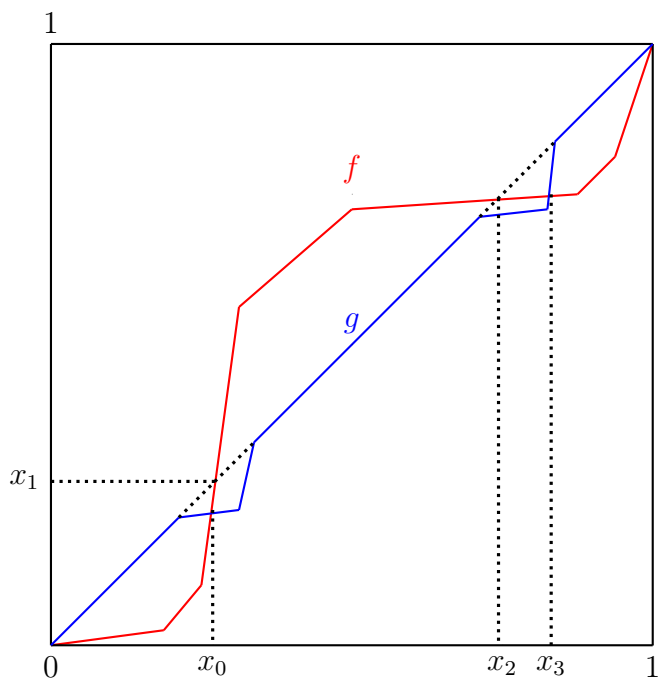
\begin{figure}
    \centering
   
\begin{tikzpicture}[thick,scale=1, every node/.style={scale=1}]
    
    \draw [color=red](0,0)--(1.5,0.2);
    \draw [color=red](1.5,0.2)--(2,0.8);
    \draw [color=red](2,0.8)--(2.5,4.5);
    \draw [color=red] (2.5,4.5)--(4,5.8);
    \draw [color=red] (4,5.8)--(7,6);
    \draw [color=red] (7,6)--(7.5,6.5);
    \draw [color=red] (7.5,6.5)--(8,8);
    \draw (0,0)--(0,8);
    \draw (0,0)--(8,0);
    \draw (8,0)--(8,8);
    \draw (0,8)--(8,8);
    \draw [color=blue](0,0)--(1.7,1.7);
    \draw [color=blue](1.7,1.7)--(2.5,1.8);
    \draw [color=blue](2.5,1.8)--(2.7, 2.7);
    \draw [color=blue](2.7,2.7)--(5.7,5.7);
    \draw [color=blue](5.7,5.7)--(6.6,5.8);
    \draw [color=blue](6.6,5.8)--(6.7,6.7);
    \draw [color=blue](6.7,6.7)--(8,8);
    \draw[very thick, dotted] (2.15,1.8)--(2.15,0);
    \draw[fill](2.15,0) circle [radius=0] node [below] [color=black] {$x_0$};
    \draw[fill](6.65,0) circle [radius=0] node [below] [color=black] {$x_3$};
    \draw[very thick, dotted] (6.65,6)--(6.65,0);
    \draw[very thick, dotted] (1.7,1.7)--(2.7,2.7);
    \draw[very thick, dotted] (5.7,5.7)--(6.7,6.7);
    \draw[very thick, dotted] (2.18,2.18)--(0, 2.18);
    \draw[very thick, dotted] (5.95,5.95)--(5.95,0); 
    \draw[fill](0, 2.2) circle [radius=0] node [left] [color=black] {$x_1$};
    \draw[fill](6,0) circle [radius=0] node [below] [color=black] {$x_2$};
    
    \draw[fill](4,4) circle [radius=0] node [above] [color=blue] {$g$};
    \draw[fill](4,6) circle [radius=0] node [above] [color=red] {$f$};
    \draw[fill](0,8) circle [radius=0] node [above] {1};
    \draw[fill](8,0) circle [radius=0] node [below] {1};
    \draw[fill](0,0) circle [radius=0] node [below] {0};
\end{tikzpicture}

\caption{The group $\langle f,g \rangle$ answer negatively Question \ref{thm:firstlevelgenerators} in $\mbox{PL}_+(I)$}
    \label{fig:11}
\end{figure}

\begin{figure}
    \centering

\begin{tikzpicture}[thick,scale=1, every node/.style={scale=1}]

    \draw [color=red](0,0)--(1.5,0.2);
    \draw [color=red](1.5,0.2)--(2,0.8);
    \draw [color=red](2,0.8)--(2.5,4.5);
    \draw [color=red] (2.5,4.5)--(4,5.8);
    \draw [color=red] (4,5.8)--(7,6);
    \draw [color=red] (7,6)--(7.5,6.5);
    \draw [color=red] (7.5,6.5)--(8,8);
    \draw (0,0)--(0,8);
    \draw (0,0)--(8,0);
    \draw (8,0)--(8,8);
    \draw (0,8)--(8,8);
    \draw [color=blue](0,0)--(0.3,0.3);
    \draw [color=blue](0.3,0.3)--(4.5,0.5);
    \draw [color=blue](4.5,0.5)--(5, 5);
    \draw [color=blue](5,5)--(5.9,5.9);
    \draw [color=blue](5.9,5.9)--(6.2,6);
    \draw [color=blue](6.2,6)--(6.25,6.25);
    \draw [color=blue](6.25,6.25)--(8,8); 
    \draw[fill](4,4) circle [radius=0] node [above] [color=blue] {$f^ngf^{-n}$};
    \draw[fill](4,6) circle [radius=0] node [above] [color=red] {$f$};
    \draw[fill](0,8) circle [radius=0] node [above] {1};
    \draw[fill](8,0) circle [radius=0] node [below] {1};
    \draw[fill](0,0) circle [radius=0] node [below] {0};
\end{tikzpicture}

 \caption{Existence of $g_1=f^{n}gf^{-n}$ in $\langle f,g\rangle$ for $n>>1$}
    \label{fig:12}
\end{figure}

\begin{figure}
    \centering

\begin{tikzpicture}[thick,scale=1, every node/.style={scale=1}]
   
    \draw [color=red](0,0)--(1.5,0.2);
    \draw [color=red](1.5,0.2)--(2,0.8);
    \draw [color=red](2,0.8)--(2.5,4.5);
    \draw [color=red] (2.5,4.5)--(4,5.8);
    \draw [color=red] (4,5.8)--(7,6);
    \draw [color=red] (7,6)--(7.5,6.5);
    \draw [color=red] (7.5,6.5)--(8,8);
    \draw (0,0)--(0,8);
    \draw (0,0)--(8,0);
    \draw (8,0)--(8,8);
    \draw (0,8)--(8,8);
    \draw [color=blue](0,0)--(2,2);
    \draw [color=blue](2,2)--(2.3,2.1);
    \draw [color=blue](2.3,2.1)--(2.4, 2.4);
    \draw [color=blue](2.4,2.4)--(2.7,2.7);
    \draw [color=blue](2.7,2.7)--(7.6,3.3);
    \draw [color=blue](7.6,3.3)--(7.8,7.8);
    \draw [color=blue](7.8,7.8)--(8,8);
    \draw[fill](4,2) circle [radius=0] node [above] [color=blue] {$f^{-n}gf^{n}$};
    \draw[fill](4,6) circle [radius=0] node [above] [color=red] {$f$};
    \draw[fill](0,8) circle [radius=0] node [above] {1};
    \draw[fill](8,0) circle [radius=0] node [below] {1};
    \draw[fill](0,0) circle [radius=0] node [below] {0};
\end{tikzpicture}

 \caption{Existence of $g_2=f^{-n}gf^n$ in $\langle f,g \rangle$ for $n>>1$}
   \label{fig:13}
\end{figure}
\medskip

Let $f(x_0)=g(x_0), f(x_1)=x_1, g(x_2)=x_2$ and $f(x_3)=g(x_3)$ for $0<x_0<x_1<x_2<x_3<1$ as shown in the Figure \ref{fig:11}.
\medskip

\textbf{Claim 1:} For any non-zero integer $n,l$, $f^n(x')=g^l(x')$ for some $x'\in (0,1)$.
\medskip

Let $l=1$, and we will show that for any integer $n$, there exists $x_n'\in (0,1)$ such that $f^n(x_{n}')=g(x_{n}')$, meaning $g^{-1}f^n$ always fixes a point in $(0,1)$. A similar argument works for any integer $l>1$. Assume $n>0$ ( a similar argument applies for $n<0$) and we will show that $F(x)=f^n(x)-g(x)$ is zero for some $x\in (0,1)$. Note that
\begin{equation}
    F(x_0)=f^n(x_0)-g(x_0)<f^n(x_0)-f(x_0)<0,
\end{equation}
as $f(x_0)<x_0, f(x_0)=g(x_0)$, further as $f$ is orientation preserving, $f^n(x_0)<f(x_0)$ for any integer $n>1$. But\begin{equation}
    F(x_1)=f^n(x_1)-g(x_1)=x_1-g(x_1)>0,
\end{equation}
as $g(x_1)<x_1$. So, by the intermediate value theorem, there always exists $x_n'\in (0,1)$ such that $f^n(x_n')=g(x_n')$. Similar reasoning can also be made for the $l>1$ case, and we completed the proof of Claim 1. 
\medskip

Claim 1 guarantees that all first-order primitive elements fix a point in $(0,1)$. Then letting $g_1=f^{n}gf^{-n}$ and $g_2=f^{-n}gf^{n}$ for $n>>1$ we get Figure \ref{fig:12} and \ref{fig:13} respectively (for more details on such increasing bump techniques see \cite{B}). 

\textbf{Claim 2:} $(g_1g_2)^kf$ acts freely on $(0,1)$ for all $k\geq 1, k\in \mathbb{Z}$.
\medskip

For small $\epsilon >0$, let $x\in (0,\epsilon)$ (similar reasoning works for other cases). Then, $$g_1g_2f(x)<g_1(x)=x.$$
So, $g_1g_2f$ does not fix a point in $(0,\epsilon)$. Hence the group $\langle f,g\rangle$ contains an element that does not fix a point in $(0,1)$ answering negatively Question \ref{thm:firstlevelgenerators}.
\end{proof}


\begin{remark}
Similar constructions as in Proposition \ref{prop:PLI} can also be arranged in the groups $\Homeo_+(I)$ and $\Diff_+^{r}(I)$ for any $r$, $1\leq r\leq \infty$ by smoothing the breakpoints in our example of $f$ and $g$ in Figure \ref{fig:11}.
\end{remark}
We conjecture that the choice of $f$ and $g$ as in Proposition \ref{prop:PLI} can be made in any $C_0$-dense subgroup of $\mbox{PL}_+(I)$:
\begin{Conjecture}
 For any finitely generated $C_0$-dense subgroup $\Gamma$ in $\mbox{PL}_+(I)$, there exists a generating set $S$ of $\Gamma$ such that the first step primitive elements (with respect to $S$) in $\Gamma$ fix a point in $I$, but there is a group element $g\in \Gamma$ such that $Fix(g)=\emptyset$.
\end{Conjecture}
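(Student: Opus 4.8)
The plan is to treat this conjecture as a transfer of the concrete construction in Proposition \ref{prop:PLI} to an arbitrary $C_0$-dense target group, with the density hypothesis doing the work of producing the needed elements. The organizing observation is that for $h \in \mathrm{PL}_+(I)$ (fixing $0$ and $1$) one has $Fix(h) \cap (0,1) = \emptyset$ exactly when $x \mapsto h(x)-x$ has constant sign on $(0,1)$, i.e. the graph of $h$ lies strictly above or strictly below the diagonal; conversely $h$ has a fixed point in $(0,1)$ precisely when $h(x)-x$ changes sign. A strict sign change is preserved under any sufficiently $C_0$-small perturbation, and the stronger condition "$h(x)-x \geq c > 0$ on $[0,1]$" is likewise $C_0$-open. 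Hence both the fixed-point property used in Claim 1 and the fixed-point-free property used in Claim 2 of Proposition \ref{prop:PLI} are stable in the $C_0$ topology, and this stability is what I would exploit.

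First I would isolate from Proposition \ref{prop:PLI} the abstract configuration actually in use: a pair $(f_0,g_0)$ whose graphs cross the diagonal and each other (in the sign-changing sense) at the marked points $x_0,x_1,x_2,x_3$, so that the intermediate value argument of Claim 1 forces every primitive $f_0^{\,n} g_0^{\pm 1}$ and $g_0^{\,n} f_0^{\pm 1}$ to meet the diagonal, together with the bump elements $g_1=f_0^{\,n}g_0 f_0^{-n}$ and $g_2=f_0^{-n}g_0 f_0^{\,n}$ producing the free element $(g_1g_2)^k f_0$. Using $C_0$-density of $\Gamma$, for small $\delta$ I would choose $f,g \in \Gamma$ with $\|f-f_0\|<\delta$ and $\|g-g_0\|<\delta$. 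By the stability above, the crossings governing Claim 1 persist, so every first step primitive built from $\{f,g\}$ still fixes a point in $(0,1)$; and since a bounded-length word depends $C_0$-continuously on its factors, the word corresponding to $(g_1g_2)^k f_0$ stays strictly above the diagonal, hence fixed-point-free. At this stage the pair $\{f,g\}\subset\Gamma$ already exhibits the phenomenon, but inside the subgroup $\langle f,g\rangle$, which may be proper in $\Gamma$.

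The remaining, and genuinely hard, step is to upgrade $\{f,g\}$ to a generating set of all of $\Gamma$ while keeping every first step primitive crossing the diagonal. Fixing a finite generating set $\{h_1,\dots,h_m\}$ of $\Gamma$, the natural move is to pass to $S=\{f,g,\tilde h_1,\dots,\tilde h_m\}$ with $\tilde h_i = w_i h_i$ for suitable $w_i \in \Gamma$; since each $h_i$ is recovered from $S$, this set still generates $\Gamma$, and one can choose each $w_i$ so that $\tilde h_i$ — and hence $\tilde h_i^{-1}$, which shares its fixed points — crosses the diagonal. The difficulty is that the first step primitives of $S$ include all pairwise products $s_i s_j^{\pm 1}$, and one must force every one of these finitely many elements to meet the diagonal \emph{simultaneously}, while scrupulously avoiding a common fixed point of the whole set: if all generators fixed a single $p \in (0,1)$ then every group element would fix $p$ and no free element could exist. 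This is precisely the delicate balance that Proposition \ref{prop:PLI} attains with only two generators, and extending it to an arbitrary generating set is where the difficulty concentrates.

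I expect this last step to be the main obstacle, and I would attack it as a genericity statement. Each requirement "$s_i s_j^{\pm 1}$ crosses the diagonal" and "$\{s_i\}$ has no common fixed point" is a $C_0$-open condition on the tuple $(f,g,\tilde h_1,\dots,\tilde h_m)$, so it would suffice to exhibit \emph{one} admissible tuple within $C_0$-reach of $\Gamma$ and then invoke density; concretely I would try to arrange the $\tilde h_i$ as controlled bumps localized on a chain of overlapping subintervals, so that each pairwise product is forced to recross the diagonal yet the union of supports leaves no globally fixed point. Whether such a tuple can always be produced inside a \emph{prescribed} $C_0$-dense $\Gamma$, rather than merely somewhere in $\mathrm{PL}_+(I)$, is exactly the point that keeps the statement a conjecture: $C_0$-density supplies approximants but gives no a priori control over which words in $\Gamma$ realize the required bump configuration.
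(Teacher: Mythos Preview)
The statement you are addressing is a \emph{Conjecture} in the paper, not a theorem: the paper offers no proof and explicitly leaves it open, presenting it only as the expected extension of Proposition~\ref{prop:PLI} to arbitrary $C_0$-dense subgroups. There is therefore no ``paper's proof'' to compare your proposal against. Your write-up is honest about this---you yourself isolate the final step (promoting the two-element configuration $\{f,g\}$ to a full generating set of $\Gamma$ while keeping every pairwise product $s_is_j^{\pm1}$ crossing the diagonal yet avoiding a global common fixed point) as the genuine obstacle, and you correctly flag it as the reason the statement remains conjectural. So your proposal is not a proof, and you know it; it is a plausible programme whose hard part is exactly the part the paper also does not know how to do.

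One technical point worth sharpening even in the part you regard as settled: the first step primitives $f^{n}g^{\pm1}$, $g^{n}f^{\pm1}$ form an \emph{infinite} family indexed by $n\in\mathbb{Z}$, so the stability slogan ``each crossing condition is $C_0$-open, hence persists under small perturbation'' is not enough on its own---an infinite intersection of open conditions need not be open. In the proof of Proposition~\ref{prop:PLI} the uniformity in $n$ comes from the fixed marked points $x_0,x_1,x_2,x_3$: the strict inequalities $f(x_0)<x_0$, $f(x_1)=x_1$, $g(x_1)<x_1$, etc., together with monotonicity of $f$, force the sign change of $f^{n}-g$ for \emph{every} $n$ simultaneously. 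Any perturbation argument must therefore track these marked points (or their perturbed analogues where the graphs of $\tilde f$, $\tilde g$, and the diagonal cross) and verify that the governing strict inequalities survive, rather than treat each $n$ separately. Your $C_0$-continuity claim for \emph{bounded}-length words, used to transfer Claim~2, is fine as stated; it is Claim~1, involving words of unbounded length, that needs this uniform control.
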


\section{Remarks in connection to Conjecture  \ref{con:primitiveelementnilpotent}, \ref{thm:platonovconjecture}}\label{thm:remarksinconjectures}
\subsection{The action of \texorpdfstring{$GL(n, \mathbb{C})$}{} on \texorpdfstring{$\mathbb{C}^n$}{} by linear transformations}
It is easy (trivial) to find two matrices $A,B \in GL(n,\mathbb{C}), n\geq 2$ with a common eigenvector of eigenvalue 1, i.e. $Fix(A)\cap Fix(B)\neq \emptyset$, such that the subgroup generated by $A$ and $B$ fixes the common eigenvector answering positively Question \ref{thm:Q2} and Question \ref{thm:firstlevelgenerators} in the group $\langle A,B \rangle$.

\subsection{The action of the Heisenberg group on \texorpdfstring{$\mathbb{R}^3$}{}}
The group of all $3\times 3$ upper triangular matrices with entries 1 on the main diagonal is the Heisenberg group $\mathbf{H}$,
\begin{center}
   \[ \Bigg \{\begin{bmatrix}
        1 & a & c\\
        0 & 1 & b\\
        0 & 0 & 1
    \end{bmatrix}
 \Bigg | a,b,c \in\mathbb{R}.  \Bigg \}\]
 \end{center}
 Let us consider any two matrices $A,B\in \mathbf{H}$, then all first step primitive elements $A^nB$ always fix the vector $(1,0,0)$, and hence the whole group fixes the vector $(1,0,0)$ in $\langle A,B\rangle$. What is interesting is the fact that $\mathbf{H}$ has many faithful representations in $GL(n,\mathbb{C})$ for $n\geq 3$ but none of them provide a counterexample to Conjecture \ref{thm:platonovconjecture}.

\section{Obstruction to properly discontinuous actions}
Properly discontinuous actions are group actions studied by topologists and geometric group theorists. They possess many desirable properties and play a significant role in understanding the structure of spaces and groups. For instance, for any covering map $p:E\to X$, the action of the deck transformation group on the $E$ is properly discontinuous\footnote{Note that the definition of properly discontinuous actions varies from context to context}.

\begin{definition}
An action of a group $G$ on a topological space $X$ is properly discontinuous if, for all $x\in X$, there exists a neighborhood $U$ of $x$ such that all non-trivial element of $g\in G$ moves $U$ i.e., $$g\neq 1\implies gU\cap U=\emptyset, \forall g\in G.$$
\end{definition}

\begin{definition}
    An action of a group $G$ on a topological space $X$ is free if any $x\in X$ is moved by all non-trivial element $g\in G$ i.e., $$g\neq 1\implies g.x\neq x, \forall g\in G.$$
\end{definition}

\begin{remark}
    Properly discontinuous actions are free. To see this, suppose to the contrary, assume the action is properly discontinuous but not free, then there exists a non-trivial $g\in G$ with $g.x=x$ for some $x\in X$. For any neighborhood $U$ of $x$, $g.U\cap U\neq \emptyset$ (as it always contains $x$), and therefore, the action is not properly discontinuous.
\end{remark}

\begin{remark}\label{thm:obstruction}
    Note that if $G$ acts $X$ and if the action admits first step primitive elements in $G$, meaning $g^nh^{\pm}, h^{n}g^{\pm}$ fixes some point under the action of $G$ on $X$, then such an action is not free and hence not properly discontinuous. 
\end{remark}

Remark \ref{thm:obstruction} says that the first step primitive elements are obstructions to properly discontinuous group actions, meaning to study Question \ref{thm:Q2} more, we need to look for more non-properly discontinuous actions, which we think is also interesting in its own way. What is more interesting is the question, what if the first step primitive elements are not obstructions? In some instances, this already implies that the action is free. We would like to raise the following questions:

\begin{question}\label{thm:firstlevelfreeprimitive}
   If all the first-step primitive elements $\mathcal{P}_1$ act freely then can we say that the entire group acts freely?
\end{question}

\begin{question}\label{thm:primitivefree}
    If all the primitive elements $\mathcal{P}$ act freely, then can we say that the entire group acts freely?
\end{question}

\subsection{Proof of Proposition \ref{prop:primitivefree}}
\begin{proof}
Let $(G,\preceq)$ be a countable left orderable group (i.e. $G$ admits a left translation invariant total order, $x\preceq y \implies gx\preceq gy$ for any $x,y,g\in G$) such that $\mathbb{Z}^2$ acts on $G$. We choose a left ordering on $(\mathbb{Z}^2,\leq_\mathbb{R})$, by realizing it as a subgroup $\langle 1,\sqrt{2} \rangle$ of $(\mathbb{R},+)$. Then, we make $\mathbb{Z}^2\ltimes G=\{((n,m);g)|(n,m)\in \mathbb{Z}, g\in G\}$ a left orderable group by lexicographic ordering, i.e. $((n_1,m_1);g)\preceq ((n_2,m_2);h)$ if $(n_1,m_1)\leq_\mathbb{R} (n_2,m_2)$.

\begin{definition}
    Let $(\Gamma,\preceq)$ be a left orderable group. An element $x\succ 0$ in $\Gamma$ is dominant, if for any arbitrary $y\in \Gamma,$ there exists $n\in \mathbb{Z}$ such that $x^n \succ y$ and $x^{-n}\prec y$.
\end{definition}

A countable left orderable group acts faithfully by orientation preserving homeomorphisms on the real line (Theorem 2.2.19, \cite{AN}), therefore $\mathbb{Z}^2\ltimes G\leq \mbox{Homeo}_+(\mathbb{R})$. Moreover, using this representation it follows that dominant elements of $\mathbb{Z}^2\ltimes G\leq \mbox{Homeo}_+(\mathbb{R})$ act freely on the real line.
\medskip

Let $\phi,\psi \in \mathbb{Z}^2\ltimes G$ such that $\phi=((1,0);f)$ and $\psi=((0,1);g)$ for some non-trivial $f,g\in G$ such that $[\phi,\psi]\neq e$. Clearly, all primitive elements in the subgroup $\langle \phi,\psi \rangle$ are dominant (if a primitive element is not positive then one can take it's inverse) and hence act freely on $\mathbb{R}$. However, the commutator $[\phi,\psi]$ has the $(0,0)$ entry in the $\mathbb{Z}^2$ coordinate and the subgroup $\langle \phi,\psi \rangle$ does not act freely on $\mathbb{R}$ (as if it acts freely, then by the H\"{o}lder's theorem (Proposition 2.2.29, \cite{AN}), $\mathbb{Z}^2\ltimes G$ has to be abelian which is a contradiction), answering negatively Question \ref{thm:primitivefree} and \ref{thm:firstlevelfreeprimitive}. Interestingly, this proposition also answers negatively Question \ref{thm:holder}.
\end{proof}

\section{Question related to Hyde's counterexample}{\label{thm:Hydesrelatedquestion}}

Another strong reformulation of Question \ref{thm:Q2} is restricting to a given fixed set of generators (as generators of a group are trivially primitive, we are essentially considering a smaller subset of the collection of all the primitive elements $\mathcal{P}$).

\begin{question}\label{thm:Q6}
For a given group $G=\langle S \rangle$ acting on a manifold $X$ with a fixed generating set $S$, if $Fix(s_i)\neq \emptyset$ for all $s_i\in S$, then is it true that $Fix(G)\neq \emptyset$, meaning the group $G$ has a global fixed point?
\end{question}

Interestingly, our Question \ref{thm:Q6} is related to the Hyde's recent counterexample of non-left orderability of $\mbox{Homeo}_+(D,\partial D)$, \cite{H}. Some examples studied by Triestino \cite{Triestino} are $\mathbb{Z}^2$ and the Klein bottle group acting on the real line by orientation preserving homeomorphisms has the property satisfying Question \ref{thm:Q6} positively. To be more precise, Lemma 3 in \cite{Triestino} states that any faithful action of $\mathbb{Z}^2=\langle f,g |f^{-1}gf=g\rangle$ or the Klein bottle group $K=\langle f,g | f^{-1}gf=g^{-1} \rangle$ on the real line, if the generators $f$ and $g$ have fixed points then the whole group $\mathbb{Z}^2$ and $K$ respectively has a global fixed point. 
\medskip

Here is an adaptation of the proof of Lemma 3 in \cite{Triestino} to a broader class of groups that we present in Proposition \ref{prop:ZKgeneralization} which also answers positively Question \ref{thm:Q6}. In the proof of Proposition \ref{prop:ZKgeneralization}, we will use the following fact which follows immediately from the definition: for any $f,g\in \mbox{Homeo}_+(\mathbb{R})$, $Fix(fgf^{-1})=f(Fix(g))$ and $Fix(g^n)=Fix(g)$ for all non-zero integer $n\in \mathbb{Z}$.

\begin{proposition}\label{prop:ZKgeneralization}
Consider the group $$\Omega_k=\bigg \langle f, g_1, \dots, g_k\hspace{0.1cm}\bigg|\hspace{0.1cm} fg_1^{n_1}f^{-1}=g_1^{m_1}, fg_2^{n_2}f^{-1}=g_2^{m_2}, \dots, fg_k^{n_k}f^{-1}=g_k^{m_k} \bigg \rangle$$ where $m_i,n_j\in \mathbb{Z}\backslash{\{0\}}$ for all $1\leq i,j<\infty$. For any faithful action of $\Omega_k$ on $\mathbf{R}$ by homeomorphisms without a global fixed point, if Fix$(g_i)\neq \emptyset, 1\leq i < \infty$ and $\displaystyle \mathop{\bigcap}_{i\geq 1} Fix(g_i)\neq \emptyset$, then Fix$(f)=\emptyset$.
\end{proposition}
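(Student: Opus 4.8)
The plan is to argue by contradiction, following the strategy of Lemma 3 in \cite{Triestino} but accommodating the more general relations $fg_i^{n_i}f^{-1}=g_i^{m_i}$. Suppose that $\mathrm{Fix}(f)\neq\emptyset$ while every $\mathrm{Fix}(g_i)\neq\emptyset$ and $\bigcap_{i\geq 1}\mathrm{Fix}(g_i)\neq\emptyset$. I will use the two facts recalled just before the statement: for orientation-preserving homeomorphisms of $\mathbb{R}$ one has $\mathrm{Fix}(hgh^{-1})=h(\mathrm{Fix}(g))$ and $\mathrm{Fix}(g^n)=\mathrm{Fix}(g)$ for all nonzero $n$. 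The first step is to extract from each defining relation a statement about fixed sets. Conjugating by $f$ and applying these facts to $fg_i^{n_i}f^{-1}=g_i^{m_i}$ gives
\begin{equation}\label{eq:prop-fixrelation}
    f\bigl(\mathrm{Fix}(g_i)\bigr)=\mathrm{Fix}\bigl(fg_i^{n_i}f^{-1}\bigr)=\mathrm{Fix}\bigl(g_i^{m_i}\bigr)=\mathrm{Fix}(g_i),
\end{equation}
so each closed set $\mathrm{Fix}(g_i)$ is \emph{invariant} under the homeomorphism $f$. Consequently the common fixed set $K=\bigcap_{i\geq 1}\mathrm{Fix}(g_i)$, which is nonempty by hypothesis, is closed and $f$-invariant.

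Next I would exploit the assumed fixed point of $f$ together with the $f$-invariance of $K$ to locate a global fixed point of the whole group $\Omega_k$. Since $f$ is orientation-preserving and $\mathrm{Fix}(f)\neq\emptyset$, the set $\mathrm{Fix}(f)$ is a nonempty closed subset of $\mathbb{R}$ and $f$ preserves the ordering of the complementary open intervals. The key point is that $f$ acts on the closed set $K$ as an orientation-preserving homeomorphism of $\mathbb{R}$ restricted to an invariant set; I want to produce a point of $K$ that is also fixed by $f$. The cleanest route is to consider the infimum (or supremum, if the infimum is $-\infty$) of $K$: because $K$ is closed and $f$-invariant and $f$ is increasing, the extremal points of $K$ are individually fixed by $f$, giving a point $x_0\in K\cap\mathrm{Fix}(f)$. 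Such an $x_0$ lies in every $\mathrm{Fix}(g_i)$ and in $\mathrm{Fix}(f)$, hence is fixed by all generators and therefore by all of $\Omega_k$, contradicting the standing assumption that the action has no global fixed point. This contradiction forces $\mathrm{Fix}(f)=\emptyset$.

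I expect the main obstacle to be the extraction of the point $x_0\in K\cap\mathrm{Fix}(f)$, that is, showing that $f$-invariance of the nonempty closed set $K$ actually yields a \emph{fixed} point of $f$ inside $K$ rather than merely an invariant set on which $f$ could translate. The resolution should come from the orientation-preserving hypothesis: an increasing homeomorphism of $\mathbb{R}$ that preserves a bounded-below closed set must fix its smallest element, and one must handle the case where $K$ is unbounded below or above by passing to the relevant extremum or by examining a connected component of $\mathbb{R}\setminus\mathrm{Fix}(f)$ meeting $K$. A subsidiary subtlety is the infinite index set ($1\leq i<\infty$): the relations and the hypothesis $\bigcap_{i\geq 1}\mathrm{Fix}(g_i)\neq\emptyset$ are stated for infinitely many generators, so I would make sure \eqref{eq:prop-fixrelation} holds uniformly in $i$ and that the intersection $K$ remains closed, which it does as an intersection of closed sets. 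Once the extremal-point argument is in place, faithfulness of the action is not actually needed for this direction, so I would note that the hypothesis is used only to guarantee that ``no global fixed point'' is a genuine constraint on the action.
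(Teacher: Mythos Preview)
Your argument is correct and follows the same overall strategy as the paper: assume $\mathrm{Fix}(f)\neq\emptyset$, deduce from each relation that $f(\mathrm{Fix}(g_i))=\mathrm{Fix}(g_i)$ and hence that $K=\bigcap_i\mathrm{Fix}(g_i)$ is a nonempty closed $f$-invariant set, and then produce a point of $K\cap\mathrm{Fix}(f)$, contradicting the absence of a global fixed point.

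The only difference is in the final extraction step. You locate a point of $K\cap\mathrm{Fix}(f)$ by taking an extremum of $K$ (or, in the unbounded case you flag, of $K\cap(-\infty,q]$ for some $q\in\mathrm{Fix}(f)$). The paper instead picks any $x\in K$ and observes that the monotone orbit $\{f^n(x)\}_{n\in\mathbb{Z}}$ lies in $K$ and, because $\mathrm{Fix}(f)\neq\emptyset$, is bounded on at least one side and therefore accumulates at some $p\in\mathrm{Fix}(f)$; closedness of $K$ then gives $p\in K$. The orbit-accumulation argument disposes of the unboundedness issue you anticipated without a case split, but your extremum argument is equally valid once you intersect $K$ with a half-line bounded by a point of $\mathrm{Fix}(f)$, exactly as you suggest. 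Your observation that faithfulness is not actually used in this direction is also correct.
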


\begin{proof}
Assume to the contrary Fix$(f)\neq \emptyset$. Let $x\in \displaystyle \mathop{\bigcap}_{i\geq 1} Fix(g_i)$. Then, the following sequence $\{f^n(x)\}_{n\in \mathbb{Z}}$ accumulates around some point $p\in Fix(f)$ and 
\begin{equation}\label{eq:18}
    \{f^n(x)\}_{n\in \mathbb{Z}}\subset \displaystyle \mathop{\bigcap}_{i\geq 1} Fix(g_i)
\end{equation}
Equation \ref{eq:18} follows from the fact that $$f(Fix(g_i))=f(Fix(g_i^{n_i}))=Fix(fg_i^{n_i}f^{-1})=Fix(g_i^{m_i})=Fix(g_i)$$ for all $1\leq i< \infty$. As $\displaystyle \mathop{\bigcap}_{i\geq 1} Fix(g_i)$ is a closed set (being an intersection of closed set), we have $p\in \displaystyle \mathop{\bigcap}_{i\geq 1} Fix(g_i)$ which is a contradiction to the fact that $p$ is fixed by the whole group $\Gamma$ (as $p$ is fixed by all the generators).
\end{proof}

Note that the group $\Omega_k$ is a large group in the sense that it contains a copy of non-abelian free group generated by $g_1,g_2,\dots,g_k$. 
\medskip


Although, we suspect, there are non-trivial group actions where Questions  \ref{thm:holder}, \ref{thm:firstlevelgenerators}, \ref{thm:primitivefree} holds positively, we think such group actions are rare. Our investigation for various group actions in this paper was driven by searching for such positive results, where the Conjecture \ref{thm:platonovconjecture} of Platonov and Potapchik holds to be true. For example, we suspect that in higher dimensions $(n\geq 5),$ there exist classes of subgroups of $GL(n,
\mathbb{C})$ where the property of all the primitive elements being unipotent would coincide with the entire subgroup being unipotent. And this is quite interesting because such examples would help us understand better the dynamics of primitive elements in obtaining deep results in connection to Conjecture \ref{thm:platonovconjecture}. 

\section{Acknowledgements.}
We are grateful to Azer Akhmedov for very useful conversations on the manuscript and for introducing to us the notion of Schreier girth and for pointing out the example in Proposition \ref{prop:primitivefree}. We are thankful to Fan Yang for valuable comments on the initial draft of this paper.

\section{Appendix}
\begin{enumerate}

\item Mathematica Code for $GL(2,\mathbb{R})$ action on $\mathbb{R}P^1$:

$$\left(A=\left(
\begin{array}{cc}
 0.13 & 0.95 \\
 0 & 1.9 \\
\end{array}
\right);\right) \left(B=\left(
\begin{array}{cc}
 0.15 & 0 \\
 -0.06 & 1.9 \\
\end{array}
\right);\right)$$

\noindent\text{MatrixPower}[A,m].\text{MatrixPower}[B,1]\\
\noindent\text{MatrixPower}[B,m].\text{MatrixPower}[A,1]\\
\noindent\text{MatrixPower}[A,m].\text{MatrixPower}[B,-1]\\
\noindent\text{MatrixPower}[B,m].\text{MatrixPower}[A,-1]\\
\noindent\text{Tr}[\text{MatrixPower}[A,m].\text{MatrixPower}[B,1]]\\
\noindent\text{Tr}[\text{MatrixPower}[B,m].\text{MatrixPower}[A,1]]\\
\noindent\text{Det}[\text{MatrixPower}[A,m].\text{MatrixPower}[B,1]]\\
\noindent\text{Det}[\text{MatrixPower}[B,m].\text{MatrixPower}[A,1]]\\
\noindent\text{MatrixPower}[B,-1].\text{MatrixPower}[A,-1].\text{MatrixPower}[B,1].\text{MatrixPower}[A,1]\\
\text{Tr}[\text{MatrixPower}[B,-1].\text{MatrixPower}[A,-1].\text{MatrixPower}[B,1].\text{MatrixPower}[A,1]]
\end{enumerate}

\end{document}